\def\NAT@def@citea{\def\@citea{\NAT@separator}}
\def\st{{\rm s.t.}}
\DeclareMathOperator*\argmin{\text{argmin}}
\theoremstyle{plain}
\newtheorem{theorem}{Theorem}[section]
\newtheorem{lemma}[theorem]{Lemma}
\newtheorem{assumption}[theorem]{Assumption}
\theoremstyle{definition}
\theoremstyle{remark}
\newcommand{\bx}{\bm{x}}
\newcommand{\bz}{\bm{z}}
\newcommand{\by}{\bm{y}}
\newcommand{\ba}{\bm{a}}
\newcommand{\bb}{\bm{b}}
\newcommand{\bd}{\bm{d}}
\newcommand{\bg}{\bm{g}}
\newcommand{\bu}{\bm{u}}
\newcommand{\bw}{\bm{w}}
\newcommand{\bv}{\bm{v}}
\newcommand{\be}{\bm{e}}
\newcommand{\la}{\langle}
\newcommand{\ra}{\rangle}
\newcommand{\sign}{\text{sign}}
\newcommand{\Kcal}{\mathcal{K}}
\newcommand{\Hcal}{\mathcal{H}}
\newcommand{\Pcal}{\mathcal{P}}
\newcommand{\Scal}{\mathcal{S}}
\begin{document}

\articletype{ARTICLE TEMPLATE}

\title{Inexact Primal-Dual Gradient Projection Methods for Nonlinear Optimization on Convex Set}

\author{
\name{Fan Zhang\textsuperscript{a, b, e}, Hao Wang\textsuperscript{a}, Jiashan Wang\textsuperscript{c}, and Kai Yang\textsuperscript{d}\thanks{CONTACT: Hao Wang. Email: haw309@gmail.com} }
\affil{\textsuperscript{a}School of Information Science and Technology,
	ShanghaiTech University, 
	Shanghai, 201210, China; \\
	\textsuperscript{b}Shanghai Institute of Microsystem and Information Technology, Chinese
	Academy of Sciences, China;\\
	\textsuperscript{c} Department of Mathematics, University of Washington, USA;\\
	\textsuperscript{d} Department of computer scicence,  Tongji University,  Shanghai, China;\\
	\textsuperscript{e}	University of Chinese Academy of Sciences, Beijing, China.}
}

\maketitle

\begin{abstract}
In this paper, we propose a novel primal-dual inexact gradient projection method for nonlinear optimization problems with convex-set constraint. This method only needs inexact computation of the projections onto the convex set for each iteration, consequently reducing the computational cost for projections per iteration. This feature is attractive especially for solving problems where the projections are computationally not easy to calculate. Global convergence guarantee and $O(1/k)$ ergodic convergence rate of the optimality residual are provided under loose assumptions. We apply our proposed strategy to $\ell_1$-ball constrained problems.  Numerical results exhibit that our inexact gradient projection methods for solving $\ell_1$-ball constrained problems are  more   efficient than the exact  methods.
\end{abstract}

\begin{keywords}
Inexact optimization,  gradient projection methods,  $\ell_1$-ball projection, first-order methods, proximal methods. 
\end{keywords}

\section{Introduction}

The primary  focus of this paper is  on designing, analyzing and testing  numerical methods for 
nonlinear optimization problems with convex-set constraint, which have wide applications in diverse 
disciplines, such as machine learning, statistics, signal processing, and control \cite{boyd1994linear,figueiredo2007gradient,hassibi1999low,ng2004feature,patriksson2008survey,van2008probing}. 
For example, maximum likelihood estimation with sparse constraints~\cite{tibshirani1996regression, Lee2006EfficientLR} often yields norm ball constrained problems.  
Another example is principal component analysis \cite{jolliffe2003modified,sigg2008expectation,witten2009penalized} in statistical analysis and engineering to find simpler or low-dimensional representations for data, and a common approach in this case is to require  each of the generated coordinate to be a weighted combination of only a small subset of the original dimension. This often leads to constraints of different kinds of norm balls.

Gradient Projection Method (GPM) \cite{frank1956algorithm, rosen1961gradient, bertsekas1999nonlinear} is one of the most common approaches for solving convex-set constrained problems. At each iteration, it moves along the direction of the negative gradient, and then projects iterates onto the constraint set if it is outside of the set. GPM only needs to use the first-order derivatives, so it  is  often considered as a scalable solver for  large-scale optimization problems. 
The convergence  of GPM is analyzed in~\cite{combettes2004solving,nesterov2013introductory}, and a linear convergence rate is shown in~\cite{bubeck2015convex} under the assumption of smoothness and strong convexity of the objective.
Many variants have appeared to improve   the original  GPM, and the Spectral Projected Gradient (SPG) method proposed in~\cite{birgin2000nonmonotone,birgin2001algorithm} is one of the most popular variants where the spectral stepsize~\cite{barzilai1988two} and nonmonotone line search \cite{grippo1986nonmonotone} are incorporated for purposes of acceleration and reducing the function evaluations spent on line search. 
It is deemed that GPM is an effective solver for large-scale settings  as long as the projection operations at each iteration  can be carried out  efficiently. However, this may not be the case in many situations as the constraint set could be so complex that the projection can not be easily computed.  
Therefore, this limits the applicability of GPM and also other projection-based algorithms including Nesterov's optimal first-order method \cite{beck2009fast, nesterov2013gradient} and projected Quasi-Newton \cite{schmidt2009optimizing}, which are efficient for problems involving simple constraint sets such as  affine subspace and norm balls.
One way to circumvent this obstacle is to avoid calculating the projections by using Frank-Wolfe method (conditional gradient method) \cite{luss2013conditional,polak1971computational,zangwill1969nonlinear}, but the convergence is shown to be generally slower than GPM~\cite{bertsekas1974goldstein}.

To reduce the computational effort spent on the projections,  inexact gradient projection methods~\cite{birgin2003inexact,patrascu2018convergence} have been proposed to  accept an approximate projection for each iteration.  In this way, the subproblem solver can early terminate once a sufficiently accurate solution to the projection subproblem is reached.   
In Inexact Spectral Projected Gradient (ISPG) method proposed by Birgin et al.~\cite{birgin2003inexact}, the subproblem solver is terminated if the subproblem objective reduction induced by the (feasible) inexact solution is at least a fraction of that induced by the exact projection.  
Another inexact GPM was recently proposed by Patrascu et al.~\cite{patrascu2018convergence}, which requires the subproblem solution to approach the constraint set over the iteration by imposing a sequence of monotonically decreasing feasibility  tolerance for the subproblems. It is shown to converge sublinearly  
 by assuming  Lipschitz differentiability  and convexity of the objective, and to converge linearly  by further assuming  strong convexity.

The key technique in such methods is  the termination criterion for the subproblem solver which has significant effect on the overall performance of GPM. 
If the termination criterion is set to be very strict, meaning the subproblem needs to be solved relatively accurately, then the computational burden of projections may not be alleviated. 
On the other hand, terminating the subproblem too early might slow down the overall convergence.  
Such termination criterion should be practical to implement, and can be easily manipulated to control the progress (namely, the ``inexactness'') achieved by the subproblem solver.  The inexact GPM in \cite{birgin2003inexact} and \cite{patrascu2018convergence} represents two typical kinds of contemporary  inexact termination criteria. 
The subproblem termination criterion in \cite{birgin2003inexact} requires prior knowledge or at least an estimate of the optimal objective.  This can be used as a reliable standard to show ``how far'' the current iterate is from being optimal. However, it is impractical in most cases since the optimal objective is generally unknown before finishing the solve.  Therefore, this method relies on Dykstra's projection algorithm for solving the subproblems as the inexact termination criterion will be eventually satisfied  during the algorithm with some (uncontrollable) tolerance.
The subproblem termination criterion in \cite{patrascu2018convergence} represents a stark contrast to \cite{birgin2003inexact}.  It depends on driving the tolerance for  subproblem optimality residual gradually to zero.  Therefore, the performance of this variant of GPM highly depends on how fast the tolerance converges to zero. And the appropriate  tolerance value may vary much for different problems due to different problem sizes or constraint set structures.

In this paper, we propose a novel primal-dual Inexact Gradient Projection Method (IGPM), which allows inexact solution of the projection subproblem. Unlike other inexact methods, we use the duality gap to evaluate the ``inexactness'' of a subproblem solution. For each projection subproblem solve, we require the gap between the initial objective and current primal value is at least a fraction of that between the initial objective and the dual objective. This condition is further relaxed to be satisfied only within some prescribed tolerance monotonically driven to zero.  Compared with existing methods, this inexact termination criterion does not involve the optimal objective. 
Moreover, the  two gaps  used in this criterion  coincide at optimal primal-dual solution by strong duality, and that the latter gap is  an upper bound for the gap between the initial and optimal primal values. 
Therefore, this  termination criterion clearly  reflects how close the inexact solution is to the optimal solution compared with the starting point. 
This intuition can help the user to easily manipulate the inexactness of the subproblem solve. 
We analyse global convergence and worst-case complexity of the proposed inexact method  for general 
nonlinear Lipschitz-differentiable objectives (not necessarily convex).  
To demonstrate the effectiveness of our proposed strategy, we apply our inexact method  to the $\ell_1$ ball constrained optimization problems.  
We decompose the $\ell_1$ ball projection into a finite number of projections onto hyperplanes, and then use the inexact termination  criterion to reduce this computational effort to only a few hyperplane projections. 

We summarize the novelties of our work below. 
\begin{itemize}
	\item  Our proposed IGPM is especially designed for the cases that the projection can not be computed efficiently.  By reducing the computational effort for each iteration of the IGPM, the overall computational cost can be alleviated. The key technique of this strategy is a novel termination criterion for the projection subproblem  using the duality gap. 
	This criterion need no use of the optimal objective, and clearly reflects how close the current objective is to the optimal objective.  We emphasize that we do not have any requirement for the selection of the projection solver. In fact, the proposed strategy can be incorporated into any projection solver that generates a sequence of convergent primal-dual iterates. 
	
	\item We derive global convergence and worst-case complexity analysis under loose assumptions without requiring the convexity of the objective. We show that the optimality residual has a $O(1/k)$ ergodic convergence rate.   
	
\end{itemize}

\subsection{Organization}
In the remainder of this section, we outline our notation and introduce various concepts that will be employed throughout the paper.  We describe the inexact termination criterion and
our proposed inexact gradient projection method in \S\ref{s.igpm}.  
The analysis of global convergence and worst-case complexity are provided in \S\ref{s.ca}.  
We apply the proposed strategy to solve the $\ell_1$ ball constrained problem   in  \S\ref{s.l1}. 
 The results of numerical experiments are presented in    \S\ref{s.ne}. Concluding remarks are provided  in \S\ref{s.c}.

\subsection{Notation}
Let $\mathbb{R}^n$ be the space of real $n$-vectors and $\mathbb{R}_+$ be the nonnegative orthant of $\mathbb{R}^n$, i.e., 
$\mathbb{R}^n_+ := \{ \bm{x}\in\mathbb{R}^n: \bm{x}\ge 0\}$. 
Define the positive natural numbers set as $\mathbb{N}= \{1,2,3,...\}$.  
The $\ell_1$ norm is indicated as $\|\cdot\|_{1}$ with the $n$-dimensional  $\ell_{1}$
ball with radius $\gamma$ denoted as $\mathbb{B}^n(\gamma):=\{\bm{x}\in\mathbb{R}^n:\|\bm{x}\|_1\leq \gamma\}$. 
 Given a set $\Omega\subset \mathbb{R}^n$, we define the convex indicator for $\Omega$ by 
\[
\delta_\Omega(\bm{x}) =  \begin{cases}  
0, & \text{if } \bm{x} \in \Omega, \\
+\infty, & \text{if } \bm{x} \notin \Omega, 
\end{cases}
\]
and its support function by 
\[ \delta^*_\Omega (\bm{u}) = \sup_{x \in \Omega} \langle \bm{x}, \bm{u} \rangle.\]
If $f$ is convex, then the subdifferential of $f$ at $\bar\bx$ is given by 
\[ \partial f(\bar\bx) : = \{ \bz  \mid   f(\bar\bx) + \la \bz, \bx - \bar\bx\ra \le f(\bx), \forall \bx\in\mathbb{R}^n \} .\]
The subdifferential of the indicator function of $\Omega$ at $\bm x$ is known as the normal cone,
\[
\partial \delta_\Omega(\bm{x})  = \mathcal{N}(\bm{x}|\Omega) = \{ \bm{v}  \mid \langle \bm{v} , \bm{y} - \bm{x} \rangle, \forall \bm{x}\in \Omega   \}.
\]
The Euclidean projection of $\bm{w}$ onto $\Omega$ is given by 
\[ \mathcal{P}_\Omega(\bm{w}) = \argmin_{\bm{x}\in \Omega} \| \bm{x}-\bm{w}\|_2^2. \]   The proximal operator associated with a convex function $h(\bm{x})$ is defined as 
\[
\textup{prox}_{h}(\bm{w})  :=\argmin \limits _{\bm{x}} h(\bm{x})+{\tfrac {1}{2}}\|\bm{x}-\bm{w}\|_{2}^{2}.
\]
It is easily seen that 
$\mathcal{P}_\Omega(\bm{w}) = \textup{prox}_{\delta_\Omega}(\bm{w}).$
Let $\mathop{\sign}(\cdot)$ represent the signum function of a real number $t$, i.e.,
$$\mathop{\sign}(t):= \begin{cases}
1, & \text{if } t>0\\
0, & \text{if } t=0\\
-1, & \text{if }  t<0.
\end{cases} $$ 
For two vectors $\ba$ and $\bb$ of the same dimension, the Hadamard product $\ba\circ\bb$ is defined as  
$(\ba \circ \bb)_i = \ba_i \bb_i$. The absolute value of vector $\ba\in\mathbb{R}^n$ is defined as 
$|\ba| = (|a_1|,\ldots, |a_n|)^T$, and the sign of $\ba$ is denoted as 
$\sign(\ba) = (\sign(a_1), \ldots, \sign(a_n))^T$.  We use $\be$ to represent the vector filled with all 1s  of appropriate dimension, and 
$\bm{0}_n$ to denote the $n$-dimensional vector of all zeros.

\section{Algorithm Description}\label{s.igpm}
In this section, we formulate our problem of interest and outline our proposed inexact methods. 
We present our algorithm in the context of the generic nonlinear optimization problems with convex set constraint 
\begin{equation}\label{f.prob}\tag{NLO}
\begin{aligned}
\min\limits_{\bm{x}\in\mathbb{R}^n} \quad &f(\bm{x})\\
\st \quad&\bm{x}\in\Omega,
\end{aligned}
\end{equation}
where $\Omega \subset \mathbb{R}^{n}$ is a closed convex set and $f\colon \mathbb{R}^{n} \rightarrow \mathbb{R} $ is  differentiable over $\Omega$.

At each iteration, gradient projection method starts from a feasible point, and moves along the negative gradient direction with stepsize $\beta_k > 0$. If the resulted point is not in the convex set, it is projected onto $\Omega$. 
The $k$th iteration can be expressed as 
\begin{equation}
\bm{x}_{k+1} = \mathcal{P}_{\Omega}(\bm{x}_{k} - \beta_k \nabla f(\bm{x}_k)).
\label{f.iter}
\end{equation}
Many options  for selecting $\beta_k$ in~\eqref{f.iter} have appeared. Goldstein~\cite{goldstein1964convex}, and Levitin and Polyak~\cite{levitin1966constrained} show that for 
  $L$-Lipschitz differentiable $f$, global convergence can be guaranteed with $\beta \in (0, 2/L)$. 
McCormick and Tapia~\cite{mccormick1972gradient} suggest exact line search  to enforce sufficient decrease in the objective.
To avoid the   minimization of a piecewise continuously differentiable function, Bertsekas~\cite{bertsekas1974goldstein} suggests a backtracking line search for finding an appropriate   $\beta_k$.  Another well-known approach is the spectral projected gradient (SPG) method \cite{birgin2000nonmonotone,birgin2001algorithm}, where the   stepsize $\beta_k$ is 
selected  based on the Barzilai-Borwein (BB) formula~\cite{barzilai1988two}
\begin{equation}\label{f.bb}
\beta^{BB1}_k = \frac{\bm{s}_{k-1}^T\bm{s}_{k-1}}{\bm{s}_{k-1}^T\bm{y}_{k-1}}  \quad \text{or}\quad \beta^{BB2}_k = \frac{\bm{s}_{k-1}^T\bm{y}_{k-1}}{\bm{y}_{k-1}^T\bm{y}_{k-1}}
\end{equation}
with $\bm{s}_{k-1}=\bm{x}_k-\bm{x}_{k-1}$ and $\bm{y}_{k-1}=\nabla f(\bm{x}_{k})-\nabla f(\bm{x}_{k-1})$.
After obtaining the projection 
\begin{equation}\label{f.spgm}
\bm{z}_k = \mathcal{P}_{\Omega}(\bm{x}_{k} - \beta_k \nabla f(\bm{x}_k)),
\end{equation}
the SPG method further requires sufficient decrease in the objective by (possibly nonmonotone) line search between $\bx_k$ and $\bz_k$ for stepsize $\alpha_k$
\[
\bm{x}_{k+1} = \bm{x}_k + \alpha_k (\bm{z}_k-\bm{x}_k). 
\]
In this paper,  our inexact strategy is presented in the framework of SPG methods. 
For the sake of simplicity,  we use fixed $\beta_k\equiv \beta$ for the description and analysis.  However, we are well aware that our proposed strategy can be easily generalized to other variants where $\beta_k$ is determined by either line search or the truncated BB stepsizes.

We start the description of our inexact method by analyzing the primal and dual of projection subproblems.
At the  $k$th iteration, we use the following shorthand for simplicity 
\[ 
\bm{g}_k: = \nabla f(\bm{x}_k)\quad \text{and} \quad \bm{v}_k := \bm{x}_k - \beta \nabla f(\bm{x}_k).
\]
At $\bx_k$, the problem of projecting $\bm{v}_k$ onto $\Omega$ can be formulated as 
\begin{equation}
\begin{aligned}
\mathop{\min}\limits_{\bm{z}} \ &  p(\bm{z};\bm{x}_k) := \frac{1}{2}\|\bm{z} - \bm{v}_k\|_2^2 + \delta_\Omega(\bm{z}),
\end{aligned}
\label{f.quad}
\end{equation} 
with the optimal solution being denoted as  $\bm{z}_k^*:=\argmin\limits_{\bm{z}}\ p(\bm{z};\bm{x}_k)$. 
For a feasible  $\bm{z}$, let  $\bm d= \bm z - \bm x_k$ and define the  objective reduction  induced by $\bm{z}$ as 
\begin{equation} \label{delta.p}
\Delta p(\bm{z};\bm{x}_k) := p(\bm{x}_k;\bm{x}_k)-p(\bm{z};\bm{x}_k). 
\end{equation}
Notice that a successful direction should give sufficient decrease in the objective, and in particular,  $\Delta  p(\bm{x}_k;\bm{x}_k) = 0$.
The associated Fenchel-Rockafellar dual~\cite{rockafellar2015convex}  of~\eqref{f.quad} is given by
\begin{equation}
\underset{\bm{u}}{\textup{max}}  \ q(\bm{u};\bm{x}_k) :=  -\frac{1}{2}\|\bm{u} - \bm{v}_k\|_2^2 - \delta_\Omega^*(\bm{u}) + \frac{1}{2} \|\bm{v}_k\|_2^2.
\label{f.dualp}
\end{equation}
By weak duality, we have $p(\bz; \bx_k) - q(\bu; \bx_k) \ge 0$ and the equality holds at primal-dual optimal 
solution $(\bz^*, \bu^*)$ by strong duality.

Given an iterative subproblem solver for computing $\bm{z}_k^*$, we use superscript $(j)$ to denote the $j$th iteration of the $k$th subproblem solve.   Assume this solver  is able to generate a sequence of primal-dual pairs $\{(\bm{z}_k^{(j)}, \bm{u}_k^{(j)})\}$ with $\{\bm{z}_k^{(j)}\} \in \Omega$. 
We assume the primal iterates generated by the solver are ``no worse than'' the trivial iterate $\bx_k$ in the sense 
\begin{equation}\label{primal is good}
 p(\bm{x}_k;\bm{x}_k) \ge p(\bm{z}_k^{(j)};\bm{x}_k) .
 \end{equation}
  In other words, it always holds true  that
$\Delta p(\bm{z}_k;\bm{x}_k) \ge 0$.
This is a reasonable requirement, since any iterate violating this condition can be replaced with $\bx_k$. 
It is fixed during the solve of the $k$th subproblem, but will be sequentially cut down to zero during the entire IGPM framework (which will be discussed later).
With these ingredients, we introduce the following important ratio corresponding to the $j$th iterate of the subproblem solver:
 
	\begin{equation}
	\gamma_k^{(j)} := \frac{p(\bm{x}_k;\bm{x}_k)-p(\bm{z}_k^{(j)};\bm{x}_k) + \omega_k }{p(\bm{x}_k;\bm{x}_k)-q(\bm{u}_k^{(j)};\bm{x}_k) + \omega_k}.
	\label{f.ratio}
	\end{equation}
\noindent

We define this ratio to evaluate the progress made by the  $j$th iterate of the subproblem solver, since it can reflect how close the current objective is from the optimal value. 
We make the following observations
\begin{itemize}
	\item First, 
	the denominator is always greater than or equal to the numerator since  $p(\bm{z}_k^{(j)};\bm{x}_k) \ge q(\bm{u}_k^{(j)};\bm{x}_k)$ by weak duality. 
	On the other hand, the numerator is guaranteed to be nonnegative due to \eqref{primal is good}. Therefore, this ratio always satisfies 
	$ \gamma_k^{(j)} \in [0,1]$. 
	
	
	\item For optimal primal-dual  $(\bm{z}_k^*, \bm{u}_k^*)$, we have 
	\[ \frac{p(\bm{x}_k;\bm{x}_k)-p(\bm{z}_k^*;\bm{x}_k) + \omega_k }{p(\bm{x}_k;\bm{x}_k)-q(\bm{u}_k^*;\bm{x}_k) + \omega_k} = 1
	\]	
	by strong duality $p(\bm{z}_k^*;\bm{x}_k) = q(\bm{u}_k^*;\bm{x}_k)$, meaning an exact projection is found. 
	Therefore, as the primal-dual iterates $(\bm{z}_k^{(j)}, \bm{u}_k^{(j)})$   approaches $(\bm{z}_k^*, \bm{u}_k^*)$, we have
	$\gamma_k^{(j)}$   converges  to $1$. 
	
	\item   The inexact method in \cite{birgin2003inexact} uses  the objective reduction induced by the current iterate $\bz_k^{(j)}$ over that caused by 
	the minimizer (the exact projection) to evaluate the progress, i.e.,  
	\begin{equation}\label{old ratio}
	\Delta p(\bm{z}_k^{(j)}) / \Delta p(\bm{z}_k^*).
	\end{equation}
	To see the relationship between ratio \eqref{old ratio} and  $\gamma_k^{(j)}$, note that  
	weak duality implies  $p(\bm{z}_k^*;\bm{x}_k) \ge q(\bm{u}_k^{(j)};\bm{x}_k)$ yielding 
	\[ \Delta p(\bm{z}_k^*)  \le p(\bm{x}_k;\bm{x}_k)-q(\bm{u}_k^{(j)};\bm{x}_k). \]
	Therefore, we know that the proposed ratio $\gamma_k^{(j)}$ satisfies
	\begin{equation}
	\frac{ \Delta p(\bm{z}_k^{(j)}; \bx_k)   + \omega_k }{\Delta p(\bm{z}_k^*; \bx_k) + \omega_k}\ge\gamma_k^{(j)}.
	\label{f.pd}
	\end{equation}
	If we set $\omega_k =0$,  the ratio on the left-hand side of \eqref{f.pd} coincides  with \eqref{old ratio}.
	
	\item   The exactness of the subproblem solution is controlled by the threshold $\gamma$. 
The primary purpose of adding the relaxation parameter  $\omega$ is mainly to prevent  numerical issues when the denominator of $\gamma_k^{(j)}$ becomes tiny around the optimal solution, 
and this parameter can be removed by setting $\omega=0$ in our theoretical analysis and implementation.   We introduce  this parameter 
 since it can allow the acceptance of  a ``more inexact''  subproblem solution and driving this relaxation $\omega_k$ to 0.

\end{itemize}

Observe that   a large value of  $\gamma_k^{(j)}$ close to 1 implies a relatively accurate solution, whereas a small value of  $\gamma_k^{(j)}$ indicates 
the iterate is far from being optimal.  Therefore, we can set a threshold $\gamma \in (0,1]$ for this ratio to let the subproblem be solved inexactly once $\gamma_k^{(j)}$ exceeds the prescribed threshold.  Specifically, given $(\bm{z}_k^{(j)}, \bm{u}_k^{(j)})$, we terminate the projection algorithm if 

\begin{equation}\boxed{
	\gamma_k^{(j)}=\frac{p(\bm{x}_k;\bm{x}_k)-p(\bm{z}_k^{(j)};\bm{x}_k) + \omega_k }{p(\bm{x}_k;\bm{x}_k)-q(\bm{u}_k^{(j)};\bm{x}_k) + \omega_k} \ge \gamma.
}
\label{criterion}
\end{equation}
 \begin{itemize}
 \item By weak duality, if \eqref{criterion} is satisfied, then 
\begin{equation}
\label{eq.temp3}
  \frac{p(\bm{x}_k;\bm{x}_k)-p(\bm{z}_k^{(j)};\bm{x}_k) + \omega_k }{p(\bm{x}_k;\bm{x}_k)-p(\bm{z}_k^*;\bm{x}_k) + \omega_k} \ge \gamma.
  \end{equation}

 \item 
 It should be noticed that the relaxation parameter $\omega_k \ge 0 $  brings more flexibility to our algorithm, making 
 an inaccurate solution more easily accepted. 
  In fact, it can be set $\omega_k=0$,  because   $p(\bm{x}_k;\bm{x}_k) = q(\bm{u}_k^{(j)};\bm{x}_k)$ implies 
 $\bx_k$ is optimal for the $k$-th subproblem.  In this case,   it can be shown later  that $\bx^k$ is also optimal for problem \eqref{f.prob}.  
 \item 	
It  should be clear that we do not have any requirement for the subproblem solver, as long as 
it can generate a sequence of primal-dual iterates converging to the optimal solution so that \eqref{criterion} will be eventually satisfied.  	
\end{itemize}

We are now ready to introduce our  inexact gradient projection method. Algorithm~\ref{a.Inexact} presents a version using constant stepsize $\beta>0$, which is often required to be smaller than $1/L$ as elaborated in later sessions.  
\begin{algorithm}[H]
	\caption{IGPM  without line search}
	\label{a.Inexact}
	\begin{algorithmic}[1] 
		\STATE Given $\varepsilon > 0$, choose $0<\gamma<1$ and $0<\beta\le 1/L$.
		\STATE Initialize $\bm{x}_{0}$,  $\omega_0$ and set $k=0$.
		\REPEAT
		\STATE Solve~\eqref{f.quad} approximately for $\bm{z}_k\in \Omega$ satisfying~\eqref{criterion}.
		\STATE Update $\bm{x}_{k+1} \leftarrow \bm{z}_k$ and $\omega_{k+1}$.
		\STATE Set $k \leftarrow k+1$.
		\UNTIL {$\|\bm{z}_{k}-\bm{x}_{k}\|\le\varepsilon$.}
	\end{algorithmic}
\end{algorithm}

If the Lipschitz constant $L$ of $f$ is impractical to estimate, we impose a backtracking line search method between the current point and the projected point to enforce a sufficient decrease in the objective. In this case, the stepsize $\beta$ actually can be allowed to vary in a prescribed interval \cite{yuan2000truncated}, such as the spectral stepsize where $\beta$ is computed using the BB formulation~\eqref{f.bb}. The iteration of the IGPM with backtracking line search is outlined in Algorithm~\ref{a.Inexact-ls}.

\begin{algorithm}[H]
	\caption{IGPM with line search}
	\label{a.Inexact-ls}
	\begin{algorithmic}[1] 
		\STATE Given  $\varepsilon>0$, choose $0<{\eta}<1$, $0<\gamma<1$, $0<  \alpha\le 1$, $\beta>0$  and $0<\theta<1$.
		\STATE Initialize $\bm{x}_{0}$, $\omega_0$, and set $k=0$.
		\REPEAT
		\STATE Solve~\eqref{f.quad} approximately for $\bm{z}_k\in \Omega$ satisfying~\eqref{criterion} and let $\bd_k = \bz_k - \bx_k$.
		\STATE Let $\alpha_k$ be the largest value in $\{\theta^0  \alpha, \theta^1  \alpha, \theta^2  \alpha, \cdots\}$ such that
		\begin{equation}
		f(\bm{x}_k+\alpha_k{\bm{d}}_k) \le f(\bm{x}_{k}) +\eta\alpha_k\bm{g}_k^T{\bm{d}}_k.
		\label{f.armijo}
		\end{equation}
		\STATE Update $\bm{x}_{k+1} \leftarrow \bm{x}_k + \alpha_k{\bm{d}}_k$ and   $\omega_{k+1}$.
		\STATE Set $k \leftarrow k+1$.
		\UNTIL {$\|\bm{z}_{k}-\bm{x}_{k}\|\le\varepsilon$.}
	\end{algorithmic}
\end{algorithm}

\section{Convergence Analysis}\label{s.ca}
In this section, we provide global convergence and worst-complexity analysis of our proposed methods.  Throughout our analysis, we make the following assumptions about   \eqref{f.prob}. 
\begin{assumption}\label{ass}
	\begin{enumerate}
		\item[(i)] $f$ is bounded below on $\Omega$ by $\underline{f}$, i.e., $f(\bm{x})\ge \underline{f}$ for any $\bm x \in \Omega$. 
		\item[(ii)] $f$ is   $L$-Lipschitz differentiable, meaning there exists a constant $L>0$ such that 
		$$
		\|\nabla f(\bm{x}) - \nabla f(\bm{y})\|_2\le L\|\bm x- \bm y \|_2.
		$$
		for any $\bm x, \bm y \in \Omega$. 
		\item[(iii)] The relaxation sequence  $\{\omega_k\}$ is summable,  i.e.,		
		$\hat{\omega}:=\sum\limits_{k=0}^{+\infty} \omega_k <+\infty.$
	\end{enumerate}
\end{assumption}

\subsection{Global convergence} 
In this subsection, we analyse the global convergence of our proposed two versions of inexact methods.  
 We first provide the following straightforward results about the subproblem   \eqref{f.quad} in the following 
lemma, which can be easily found in many places such as \cite{Lecture_JV}.  
\begin{lemma}\label{l.subp}
	For any $\bx_k \in \Omega$, let $\bz_k^*$ be the optimal solution of the subproblem \eqref{f.quad}. 
	If $\Delta p(\bz;\bm{x}_k)\ge 0$ for $\bz\in \Omega$, then $\bd = \bz - \bx_k$ is a descent direction for $f$ at $\bx_k$. 
	Furthermore, $\bm{x}_k$ is  a first-order optimal solution to  \eqref{f.prob} if and only if $\bz_k^* = \bx_k$.  
\end{lemma}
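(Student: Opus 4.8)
The plan is to treat the two assertions separately: the descent-direction claim reduces to a direct algebraic identity, while the optimality equivalence is a standard fixed-point characterization of the projected-gradient map.

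For the first claim, I would begin by noting that since both $\bx_k$ and $\bz$ lie in $\Omega$, the indicator terms vanish, so that $p(\bx_k;\bx_k)=\tfrac12\|\bx_k-\bv_k\|_2^2$ and $p(\bz;\bx_k)=\tfrac12\|\bz-\bv_k\|_2^2$. Substituting $\bv_k=\bx_k-\beta\bg_k$ and writing $\bz-\bv_k=\bd+\beta\bg_k$ with $\bd=\bz-\bx_k$, expanding the squared norms yields the identity
\[
\Delta p(\bz;\bx_k)=-\tfrac12\|\bd\|_2^2-\beta\,\bg_k^{T}\bd.
\]
The hypothesis $\Delta p(\bz;\bx_k)\ge 0$ then forces $\beta\,\bg_k^{T}\bd\le-\tfrac12\|\bd\|_2^2$, hence $\bg_k^{T}\bd\le-\tfrac{1}{2\beta}\|\bd\|_2^2<0$ whenever $\bd\ne\bm{0}_n$, so $\bd$ is a descent direction for $f$ at $\bx_k$.

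For the equivalence, I would use that by definition $\bz_k^{*}=\argmin_{\bz}p(\bz;\bx_k)=\mathcal{P}_\Omega(\bv_k)$, and invoke the variational characterization of the Euclidean projection: $\bz_k^{*}=\mathcal{P}_\Omega(\bv_k)$ is the unique point of $\Omega$ with $\la\bv_k-\bz_k^{*},\by-\bz_k^{*}\ra\le 0$ for all $\by\in\Omega$. Imposing $\bz_k^{*}=\bx_k$ and substituting $\bv_k-\bx_k=-\beta\bg_k$ converts this (using $\beta>0$) into $\la\bg_k,\by-\bx_k\ra\ge 0$ for all $\by\in\Omega$, which is precisely the first-order optimality condition $-\nabla f(\bx_k)\in\mathcal{N}(\bx_k|\Omega)$ for \eqref{f.prob}. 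Reading these implications in both directions establishes the ``if and only if.''

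The computation in the first part and both directions of the equivalence are essentially immediate once the projection's variational inequality is available. The only step warranting care is stating that inequality with the correct sign and tracking the positive factor $\beta$ when passing between it and the normal-cone condition, since a sign slip there would be easy; I would verify that conversion explicitly. Beyond this, I anticipate no genuine obstacle, as the statement is a standard fact about the proximal/projected-gradient fixed point.
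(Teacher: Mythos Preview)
Your proposal is correct. The paper itself does not supply a proof of this lemma; it states the result as ``straightforward'' and simply refers the reader to a standard source (\cite{Lecture_JV}). Your argument is exactly the usual one, and the identity $\Delta p(\bz;\bx_k)=-\tfrac12\|\bd\|_2^2-\beta\,\bg_k^{T}\bd$ you derive for the first part is precisely the computation the paper later relies on (see the proof of Lemma~\ref{l.wd} and inequality~\eqref{gd over gd}), so your approach is fully aligned with how the authors use the lemma downstream.
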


It is obviously true  that    $\bz_k^* = \bx_k$  is equivalent to 
\begin{equation}\label{f.condition} 
\Delta p(\bz_k^*;\bm{x}_k)=  0,
\end{equation} 
by the strictly convexity of the projection subproblem. 
Therefore, if a point is optimal to the~\eqref{f.prob}, it must satisfy  condition~\eqref{f.condition}.
The first result of our analysis is to show that every  limit point of the iterates generated by our algorithms 
must satisfy this condition.

Also notice that  Lemma~\ref{l.subp} implies that  $\bx_k$  satisfies the first-order optimality condition of \eqref{f.prob} if and only if 
\begin{equation}
\|\bz_k^* - \bx_k \|^2_2 = \|\bm{x}_k - \mathcal{P}_\Omega(\bm{x}_k - \beta \bm{g}_k)\|_2^2 = 0. 
\end{equation} 
Therefore, we can define 
$E(\bm{x}_k;\beta) := \|\bm{x}_k - \mathcal{P}_\Omega(\bm{x}_k - \beta \bm{g}_k)\|_2^2$  as the optimality residual, as is used by 
\cite{Lecture_JV}.  The second result of our analysis 
shows that this optimality residual converges to zero for our proposed methods. 
%

Before proceed to our main result,  we show that the line search stepsize $\{\alpha_k\}$ for Algorithm \ref{a.Inexact-ls} is bounded away from 0 in the following lemma. 
\begin{lemma}\label{l.wd} Suppose $\{\bx_k\}$ is generated by Algorithm \ref{a.Inexact-ls}.  
It holds true that   
\[ \alpha_k \ge  \frac{\theta (1-\eta)}{\beta L}.
\]
\end{lemma}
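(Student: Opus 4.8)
The plan is to run the standard backtracking argument built on the descent lemma, but to first extract from the inexact acceptance rule the one nonstandard ingredient that ties the search direction $\bd_k = \bz_k - \bx_k$ to the gradient. First I would record the exact form of the objective reduction of the projection subproblem. Expanding $p(\bz_k;\bx_k) = \tfrac12\|\bz_k - \bv_k\|_2^2$ for the feasible iterate $\bz_k$, with $\bv_k = \bx_k - \beta\bg_k$, a direct computation gives
\[
\Delta p(\bz_k;\bx_k) = p(\bx_k;\bx_k) - p(\bz_k;\bx_k) = -\tfrac12\|\bd_k\|_2^2 - \beta\,\bg_k^T\bd_k.
\]
Because the accepted iterate satisfies \eqref{primal is good}, i.e. $\Delta p(\bz_k;\bx_k)\ge 0$, this rearranges into the key inequality
\[
-\bg_k^T\bd_k \ \ge\ \tfrac{1}{2\beta}\|\bd_k\|_2^2 ,
\]
which both reconfirms that $\bd_k$ is a descent direction (Lemma~\ref{l.subp}) and, crucially, yields a quantitative lower bound on $-\bg_k^T\bd_k$ in terms of $\|\bd_k\|_2^2$.

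Next I would invoke $L$-Lipschitz differentiability (Assumption~\ref{ass}(ii)) in the form of the descent lemma: for any $t>0$,
\[
f(\bx_k + t\bd_k) \ \le\ f(\bx_k) + t\,\bg_k^T\bd_k + \tfrac{Lt^2}{2}\|\bd_k\|_2^2 .
\]
Comparing the right-hand side with the Armijo target $f(\bx_k) + \eta t\,\bg_k^T\bd_k$ shows that \eqref{f.armijo} is guaranteed once $\tfrac{Lt}{2}\|\bd_k\|_2^2 \le (1-\eta)\bigl(-\bg_k^T\bd_k\bigr)$. Using the key inequality to lower-bound the right-hand side, it then suffices that $\tfrac{Lt}{2}\|\bd_k\|_2^2 \le \tfrac{1-\eta}{2\beta}\|\bd_k\|_2^2$, that is $t \le \tfrac{1-\eta}{\beta L}$. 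Hence every trial stepsize not exceeding $\tfrac{1-\eta}{\beta L}$ is automatically accepted by the Armijo test.

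Finally I would exploit the geometric structure of the backtracking. The binding case is when a reduction actually occurs: then the preceding trial $\alpha_k/\theta$ was rejected, so by the previous step it cannot satisfy $\alpha_k/\theta \le \tfrac{1-\eta}{\beta L}$, whence $\alpha_k/\theta > \tfrac{1-\eta}{\beta L}$ and $\alpha_k > \tfrac{\theta(1-\eta)}{\beta L}$; when instead the full initial step is accepted, $\alpha_k$ equals its maximal admissible value. In either case the stated lower bound holds.

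I expect the main obstacle to be the first step rather than the line search itself. The only property of the inexactly accepted $\bz_k$ that is needed here is the seemingly weak inequality $\Delta p(\bz_k;\bx_k)\ge 0$, and the work lies in translating it, via the explicit expansion of the quadratic projection objective, into the curvature-type bound $-\bg_k^T\bd_k \ge \tfrac{1}{2\beta}\|\bd_k\|_2^2$. Once that bound is secured, the descent-lemma and backtracking arguments are entirely routine.
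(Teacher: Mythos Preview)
Your proposal is correct and follows essentially the same route as the paper: derive $-\bg_k^T\bd_k \ge \tfrac{1}{2\beta}\|\bd_k\|_2^2$ from $\Delta p(\bz_k;\bx_k)\ge 0$, combine it with the descent lemma to obtain an explicit Armijo-acceptance threshold, and finish with the standard backtracking argument. The only cosmetic difference is that the paper first identifies the exact Armijo threshold $-\tfrac{2(1-\eta)\bg_k^T\bd_k}{L\|\bd_k\|_2^2}$ and then applies the curvature bound at the very end, whereas you substitute the bound one step earlier to reach the threshold $\tfrac{1-\eta}{\beta L}$ directly.
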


\begin{proof} 
First of all, by \eqref{primal is good}, we require the subproblem solution always satisfies 
$\Delta p(\bm{z}_k;\bm{x}_k) \ge 0$, or equivalently,  $- \tfrac{1}{2}\|\bm{d}_k\|_2^2 - \beta\bm{g}_k^T\bm{d}_k \ge 0$. 
Hence, we have 
\begin{equation}\label{gd over gd}
- \bm{g}_k^T\bm{d}_k /\|\bd_k\|_2^2  \ge \tfrac{1}{2\beta}.
\end{equation} 

To find a lower bound for $\alpha_k$, it follows from Assumption \ref{ass}(ii) that
\[	f(\bm{x}_k+\alpha\bm{d}_k) \le f(\bm{x}_k)+\alpha\bm{g}_k^T\bm{d}_k + \tfrac{L}{2}\alpha^2\|\bm{d}_k\|_2^2,
\]
Therefore, for any $\alpha \in (0, -\tfrac{2(1-\eta)\bm{g}_k^T\bm{d}_k}{L\|\bm{d}_k\|_2^2}]$, the 
	Armijo condition \eqref{f.armijo} is satisfied. Hence, the backtracking procedure must end up with 
\[ \alpha_k \ge -\tfrac{2\theta (1-\eta)\bm{g}_k^T\bm{d}_k}{L\|\bm{d}_k\|_2^2},\]
yielding 
$\alpha_k \ge  \tfrac{\theta (1-\eta)}{\beta L}$
by \eqref{gd over gd}.

\end{proof}

Next we show that for both algorithms, the subproblem objective reductions induced 
by either the inexact projection $\bz_k$ or the exact projection $\bz^*$ vanish.

\begin{lemma}\label{l.conv1}
	Suppose $\{\bm{x}_k\}$ is generated by Algorithm  \ref{a.Inexact} or  \ref{a.Inexact-ls}. It holds true that 
	\[ \lim\limits_{{k\to\infty}}\Delta p(\bz_k;\bm{x}_k)=0\quad\text{and} \quad \lim\limits_{{k\to\infty}}\Delta p(\bm{z}_k^*;\bm{x}_k)=0.\]
\end{lemma}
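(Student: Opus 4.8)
The plan is to establish a single \emph{sufficient-decrease} inequality of the form $c\,\Delta p(\bz_k;\bx_k) \le f(\bx_k) - f(\bx_{k+1})$ with a constant $c>0$, valid for both algorithms, and then telescope over $k$ using the lower bound $\underline{f}$ on $f$. The starting point is the elementary identity, valid for feasible $\bz_k$ with $\bd_k = \bz_k - \bx_k$,
\begin{equation}
\Delta p(\bz_k;\bx_k) = -\tfrac{1}{2}\|\bd_k\|_2^2 - \beta \bg_k^T\bd_k,
\end{equation}
which follows by expanding the squared norms in \eqref{f.quad} and using $\bx_k - \bv_k = \beta\bg_k$; this already appears implicitly in the proof of Lemma~\ref{l.wd}.

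For Algorithm~\ref{a.Inexact} I would apply the descent inequality from Assumption~\ref{ass}(ii) at $\bx_{k+1} = \bz_k$, substitute $\bg_k^T\bd_k = -\tfrac{1}{\beta}\Delta p(\bz_k;\bx_k) - \tfrac{1}{2\beta}\|\bd_k\|_2^2$ from the identity above, and invoke $\beta \le 1/L$ to absorb the $\|\bd_k\|_2^2$ term, yielding $f(\bx_k) - f(\bx_{k+1}) \ge \tfrac{1}{\beta}\Delta p(\bz_k;\bx_k)$. For Algorithm~\ref{a.Inexact-ls} I would instead combine the Armijo condition \eqref{f.armijo} with the estimate $-\bg_k^T\bd_k \ge \tfrac{1}{\beta}\Delta p(\bz_k;\bx_k)$ (again the identity together with $\Delta p \ge 0$) and the uniform lower bound $\alpha_k \ge \theta(1-\eta)/(\beta L)$ of Lemma~\ref{l.wd}, producing $f(\bx_k) - f(\bx_{k+1}) \ge \tfrac{\eta\theta(1-\eta)}{\beta^2 L}\,\Delta p(\bz_k;\bx_k)$. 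In either case, summing from $k=0$ to $K$ telescopes the right-hand side to $f(\bx_0) - f(\bx_{K+1}) \le f(\bx_0) - \underline{f}$, which is finite by Assumption~\ref{ass}(i). Since every term $\Delta p(\bz_k;\bx_k) \ge 0$ by \eqref{primal is good}, the series $\sum_k \Delta p(\bz_k;\bx_k)$ converges, forcing $\Delta p(\bz_k;\bx_k) \to 0$.

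For the second limit I would transfer this conclusion to the exact reduction through the termination criterion. Rearranging \eqref{eq.temp3} gives $\gamma\,\Delta p(\bz_k^*;\bx_k) \le \Delta p(\bz_k;\bx_k) + (1-\gamma)\omega_k$. Since $\{\omega_k\}$ is summable by Assumption~\ref{ass}(iii), in particular $\omega_k \to 0$, and since $\Delta p(\bz_k;\bx_k) \to 0$ from the first part, the right-hand side tends to zero; combined with $\Delta p(\bz_k^*;\bx_k) \ge \Delta p(\bz_k;\bx_k) \ge 0$ (as $\bz_k^*$ minimizes $p(\cdot;\bx_k)$), a squeeze gives $\Delta p(\bz_k^*;\bx_k) \to 0$.

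The step I expect to require the most care is pinning down the sufficient-decrease constant in the line-search case: one must chain the Armijo inequality, the descent-direction estimate \eqref{gd over gd}, and Lemma~\ref{l.wd} in the correct order so that the dependence on $\Delta p(\bz_k;\bx_k)$ is preserved rather than collapsed into a bound on $\|\bd_k\|_2^2$ alone. Once the two decrease inequalities are in hand, the telescoping and the final squeeze are routine.
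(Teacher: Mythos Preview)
Your proposal is correct and follows essentially the same approach as the paper: the descent-lemma/Armijo telescoping to obtain $\Delta p(\bz_k;\bx_k)\to 0$, followed by the rearranged inexactness inequality \eqref{eq.temp3} and $\omega_k\to 0$ to pass to $\Delta p(\bz_k^*;\bx_k)\to 0$. The only cosmetic difference is that for Algorithm~\ref{a.Inexact-ls} you package the Armijo bound, Lemma~\ref{l.wd}, and the estimate $-\bg_k^T\bd_k\ge\tfrac{1}{\beta}\Delta p(\bz_k;\bx_k)$ into a single sufficient-decrease inequality, whereas the paper first shows $\bg_k^T\bd_k\to 0$ as an intermediate step and then invokes $\Delta p(\bz_k;\bx_k)\le -\beta\bg_k^T\bd_k$.
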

\begin{proof}  We first prove this for Algorithm  \ref{a.Inexact} and suppose $\{\bx_k\}$ is generated by Algorithm  \ref{a.Inexact}. Notice that in this case we set
$\bx_{k+1} = \bz_k$. 
It follows from Assumption \ref{ass}(ii) and $\beta \le 1/L$ that 
\begin{equation}
\label{eq.temp1}
	\begin{aligned} 
	f(\bx_{k+1})
	\le &\ f(\bm{x}_{k}) + \bm{g}_k^T(\bx_{k+1}-\bm{x}_{k})+\frac{L}{2}\|\bx_{k+1}-\bm{x}_{k}\|_2^2\\
	\le &\ f(\bm{x}_{k}) + \bm{g}_k^T(\bx_{k+1}-\bm{x}_{k})+\frac{1}{2\beta}\|\bx_{k+1}-\bm{x}_{k}\|_2^2\\
	= &\   f(\bm{x}_{k}) - \frac{1}{\beta}\Delta p(\bx_{k+1};\bm{x}_{k}),
	\end{aligned}
\end{equation}
 then   we have
	$$
	\Delta p(\bx_{k+1};\bm{x}_{k}) \le \beta(f(\bm{x}_{k}) -f(\bm{x}_{k+1})).
	$$
	Summing up both sides of this inequation from $0$ to $t$ gives
\[
	\sum\limits_{k=0}^{t} \Delta p(\bm{x}_{k+1};\bm{x}_{k}) \le \beta\sum\limits_{k=0}^{t}(f(\bm{x}_{k}) -f(\bm{x}_{k+1})) =   \beta(f(\bm{x}_0)- f(\bx_{t+1}) ). 
\]
Letting $t\to \infty$ and from Assumption \ref{ass}(i), we have $\lim\limits_{{k\to\infty}}\Delta p(\bm{x}_{k+1};\bm{x}_k)=0.$ This proves the first limit. 

To prove the second limit,  we have from \eqref{eq.temp3}  that  
	\begin{equation}\label{f.pzpzs}
	\Delta p(\bm{x}_{k+1};\bm{x}_k) + \omega_k\ge \gamma(\Delta p(\bm{z}_k^*;\bm{x}_k) + \omega_k). 
	\end{equation}
Therefore,  $\Delta p(\bm{z}_k^*;\bm{x}_k) \le \frac{1}{\gamma}(\Delta p(\bm{z}_k;\bm{x}_k) + (1-\gamma)\omega_k),$
which, combined with the first limit and  $ \lim\limits_{{k\to\infty}}\omega_k = 0$ by     Assumption \ref{ass}(iii), yields $\lim\limits_{{k\to\infty}}\Delta p(\bm{z}_k^*;\bm{x}_k)=0.$  This completes the proof for Algorithm \ref{a.Inexact}.

Now suppose $\{\bx_k\}$ is generated by Algorithm \ref{a.Inexact-ls}.  We start with the 
Armijo condition \eqref{f.armijo} and sum up its both sides from 0 to $t$ to obtain 
\[ -\sum_{k=0}^t \eta\alpha_k\bm{g}_k^T\bm{d}_k \le \ \sum\limits_{k=0}^t (f(\bm{x}_k)-f(\bm{x}_k + \alpha_k\bm{d}_k)) = f(\bx_0) - f(\bx_{t+1}). \]
Letting $t\to\infty$, we have $ -\sum_{k=0}^\infty \eta\alpha_k\bm{g}_k^T\bm{d}_k  \le f(\bx_0) - \underline f$ by Assumption \ref{ass}(i). 
It follows from Lemma \ref{l.wd} and $\bg_k^T\bd_k \le 0$ that $\bg_k^T \bd_k \to 0$. The fact 
\begin{equation}\label{eq.temp2}
 \Delta p(\bm{z}_k;\bm{x}_k) =  - \beta \bm{g}_k^T\bm{d}_k - \tfrac{1}{2}\|\bd_k\|_2^2 \le   - \beta \bm{g}_k^T\bm{d}_k, 
 \end{equation} 
indicates $ \Delta p(\bm{z}_k;\bm{x}_k) \to 0$. 
It then follows from   \eqref{f.pzpzs} 
that  $ \Delta p(\bm{z}_k^*;\bm{x}_k) \to 0$ is also true.

\end{proof}

We are now ready to provide the first global convergence result for both Algorithms. 
\begin{theorem}\label{global.1.1}
	Suppose $\{\bm{x}_k\}$ is generated by Algorithm  \ref{a.Inexact} or \ref{a.Inexact-ls}. It holds true that 
	every  limit  point $\bm{x}^*$ of $\{\bm{x}_k\}$ must be a first-order optimal solution for \eqref{f.prob}.
\end{theorem}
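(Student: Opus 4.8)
The plan is to show that the optimality residual $E(\bx_k;\beta) = \|\bx_k - \Pcal_\Omega(\bx_k - \beta\bg_k)\|_2^2 = \|\bz_k^* - \bx_k\|_2^2$ vanishes along the whole sequence, and then pass to any limit point via continuity. First I would invoke Lemma~\ref{l.conv1}, which already guarantees $\Delta p(\bz_k^*;\bx_k) \to 0$ for both algorithms. The crucial link is to bound the residual below by this vanishing quantity. Since the subproblem objective $p(\cdot;\bx_k)$ contains the quadratic term $\tfrac12\|\bz - \bv_k\|_2^2$, it is $1$-strongly convex with unique minimizer $\bz_k^*$, so the standard strong-convexity inequality (applied with $0 \in \partial p(\bz_k^*;\bx_k)$) gives
\[
\Delta p(\bz_k^*;\bx_k) = p(\bx_k;\bx_k) - p(\bz_k^*;\bx_k) \ge \tfrac12 \|\bx_k - \bz_k^*\|_2^2 = \tfrac12 E(\bx_k;\beta).
\]
Combined with $\Delta p(\bz_k^*;\bx_k)\to 0$, this yields $E(\bx_k;\beta)\to 0$ for the full sequence.

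Next, let $\bx^*$ be any limit point, with subsequence $\bx_{k_i}\to\bx^*$. I would argue that the residual map $\bx\mapsto \|\bx - \Pcal_\Omega(\bx-\beta\nabla f(\bx))\|_2^2$ is continuous: $\nabla f$ is continuous by Assumption~\ref{ass}(ii), and the Euclidean projection $\Pcal_\Omega$ onto the closed convex set $\Omega$ is nonexpansive, hence continuous, so the composition is continuous. Passing to the limit along the subsequence,
\[
\|\bx^* - \Pcal_\Omega(\bx^*-\beta\nabla f(\bx^*))\|_2^2 = \lim_{i\to\infty} E(\bx_{k_i};\beta) = 0,
\]
so $\bx^* = \Pcal_\Omega(\bx^*-\beta\nabla f(\bx^*))$; that is, the exact projection at $\bx^*$ coincides with $\bx^*$. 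By Lemma~\ref{l.subp} this is precisely the first-order optimality condition for~\eqref{f.prob}, which completes the argument.

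The main obstacle I anticipate is not the strong-convexity bound, which is routine, but making the final limiting step rigorous: I must ensure that the subproblem minimizer depends continuously on $\bx$ through $\bv_k = \bx - \beta\nabla f(\bx)$, so that $E(\cdot;\beta)$ is genuinely continuous. This hinges on the nonexpansiveness of $\Pcal_\Omega$ together with continuity of $\nabla f$; once these are in hand the limit passes through cleanly. A secondary point worth verifying is that Lemma~\ref{l.conv1} supplies the vanishing of $\Delta p(\bz_k^*;\bx_k)$ for \emph{both} algorithms, which it does, so no separate treatment of the line-search variant is required here.
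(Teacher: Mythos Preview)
Your proof is correct but follows a genuinely different route from the paper. The paper argues by contradiction: assuming a limit point $\bx^*$ with $\Delta p(\bz^*;\bx^*)\ge\epsilon>0$, it uses Lemma~\ref{l.conv1} to obtain $p(\bz_k^*;\bx_k)>p(\bx_k;\bx_k)-\epsilon/2$ for large $k$, and then shows via continuity of $p(\bx_k;\bx_k)$ and $p(\bz^*;\bx_k)$ in $\bx_k$ that along the subsequence $p(\bz^*;\bx_k)<p(\bx_k;\bx_k)-\epsilon/2$, contradicting optimality of $\bz_k^*$. The paper thus works entirely with subproblem objective values and never passes through the residual $E(\bx_k;\beta)$ in this theorem; the bound $E(\bx_k;\beta)\le 2\Delta p(\bz_k^*;\bx_k)$ appears only afterwards, in Theorem~\ref{l.KKTcon}, and is derived there via the Moreau decomposition and the normal cone characterization of $\bu_k^*$ (Lemma~\ref{l.noco}) rather than by strong convexity.

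Your approach is more economical: the $1$-strong convexity of $p(\cdot;\bx_k)$ gives the same inequality in one line, so you obtain $E(\bx_k;\beta)\to 0$ immediately from Lemma~\ref{l.conv1}, and then the nonexpansiveness of $\Pcal_\Omega$ together with continuity of $\nabla f$ lets you pass to the limit directly, without contradiction. In effect you prove Theorem~\ref{l.KKTcon} first (by a simpler argument than the paper's) and deduce Theorem~\ref{global.1.1} from it via continuity of the residual map. The paper's ordering is the reverse, and its proof of Theorem~\ref{global.1.1} is self-contained but longer; your route has the advantage of unifying both results and avoiding the duality machinery of Lemma~\ref{l.noco} for this particular inequality.
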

\begin{proof}	
Based on \eqref{f.condition}, it suffices to show that the subproblem at $\bx^*$ has a stationary point 
$\bz^* = \bx^*$, or equivalently, $\Delta p(\bz^*; \bx^*) = 0$. 
We prove this  by contradiction by assuming 
that  there exists a limit  point $\bm{x}^*$ of  $\{\bm{x}_k\}$  such that 
	\begin{equation}
	\Delta p(\bm{z}^*;\bm{x}^*) = p(\bm{x}^*;\bm{x}^*)-p(\bm{z}^*;\bm{x}^*)\ge\epsilon > 0.
	\label{f.epsa1}
	\end{equation}
Notice by Lemma  \ref{l.conv1},  there exists $k_0 \in\mathbb{N}$ such that 
$\Delta p(\bm{z}_k^*;\bm{x}_k)=p(\bm{x}_k;\bm{x}_k) - p(\bm{z}_k^*;\bm{x}_k)< \epsilon/2$, or equivalently, 
	\begin{equation}
	p(\bm{z}_k^*;\bm{x}_k)  >  p(\bm{x}_k;\bm{x}_k)  - \epsilon/2
	\label{f.eps1a1}
	\end{equation}
	for all $k>k_0$. 
	
To derive a conclusion contradicting  \eqref{f.eps1a1}, first of all, notice that 
\begin{equation}
\label{pxkpxstar} \begin{aligned}
	|p(\bm{x}_k;\bm{x}_k)-p(\bm{x}^*;\bm{x}^*)| &= |\tfrac{1}{2}\beta^2\|\bm{g}_k\|_2^2-\tfrac{1}{2}\beta^2\|\bm{g}_*\|_2^2|\\
	&= \tfrac{1}{2}\beta^2|\|\bm{g}_k\|_2^2-\|\bm{g}_*\|_2^2|\\
	&= \tfrac{1}{2}\beta^2|(\bm{g}_k-\bm{g}_*)^T (\bm{g}_k+\bm{g}_*)|\\
	&\le \tfrac{1}{2}\beta^2\|\bm{g}_k-\bm{g}_*\|_2 \|\bm{g}_k+\bm{g}_*\|_2\\
		&\le \tfrac{1}{2}\beta^2\|\bm{g}_k-\bm{g}_*\|_2 (\|\bm{g}_k\|_2 +\| \bm{g}_*\|_2).
	\end{aligned}
\end{equation}
By Assumption \ref{ass}(ii),  we know there exists $\delta > 0$ such that 
$\| \bg_k \|_2 \le \delta$. It then follows from \eqref{pxkpxstar} that 
\begin{equation}
\label{pxkpxstar2}
 |p(\bm{x}_k;\bm{x}_k)-p(\bm{x}^*;\bm{x}^*)|  \le  \tfrac{1}{2}\beta^2\|\bm{g}_k-\bm{g}_*\|_2 (\delta +\| \bm{g}_*\|_2).
 \end{equation}
Now consider a subsequence    $\Kcal\subseteq \mathbb{N}$ such that   $\{ \bx_k\}_{k\in \Kcal} \to \bx^*$.  By the continuity of $\bg$, there exists $k_1\in \mathbb{N}$ such that 
$\|\bm{g}_k-\bm{g}_*\|_2 \le  \tfrac{\epsilon}{ 2\beta^2(\delta +\| \bm{g}_*\|_2 )}$ for any $k> k_1, k\in\Kcal$. Combining \eqref{pxkpxstar2}, we have 
\begin{equation}\label{eq.ineq1}
  |p(\bm{x}_k;\bm{x}_k)-p(\bm{x}^*;\bm{x}^*)|  \le  \tfrac{1}{2} \beta^2 \frac{\epsilon}{ 2\beta^2(\delta +\| \bm{g}_*\|_2 )}  (\delta +\| \bm{g}_*\|_2) = \epsilon/4. 
\end{equation}

On the other hand, notice that $p(\bz^*; \bx)$ is continuous with respect to $\bx$. Therefore, there exists $k_2\in \mathbb{N}$ such that 
 \begin{equation}\label{eq.ineq2}
	|p(\bm{z}^*;\bm{x}^*) - p(\bm{z}^*;\bm{x}_k)| <\epsilon/4
 \end{equation}
 for any $k>k_2, k\in \Kcal$.

Now let  $k_3 = \max\{k_0, k_1, k_2 \}$.  It following from  \eqref{f.epsa1}, \eqref{eq.ineq1} with  \eqref{eq.ineq2} that 
	\begin{equation}
	\begin{aligned}
	&p(\bm{x}_k;\bm{x}_k)-p(\bm{z}^*;\bm{x}_k)\\
	=&(p(\bm{x}_k;\bm{x}_k)-p(\bm{x}^*;\bm{x}^*))+ (p(\bm{x}^*;\bm{x}^*)- p(\bm{z}^*;\bm{x}^*))+(p(\bm{z}^*;\bm{x}^*) - p(\bm{z}^*;\bm{x}_k))\\
	>& - \epsilon/4 +\epsilon- \epsilon/4 = \epsilon/2
	\end{aligned}
	\label{f.contra1}
	\end{equation}
	for any $k>k_3,\ k\in\mathcal{K}$. This is equivalent to  
	\[p(\bm{z}^*;\bm{x}_k) < p(\bm{x}_k;\bm{x}_k) - \epsilon/2\]
	for any $k>k_3\ge k_0,\ k\in\mathcal{K}$, 
	contradicting \eqref{f.eps1a1} since $\bz_k^*$ is the unique optimal solution of the $k$th subproblem. 
 \end{proof}

We next  investigate  the convergence results of   the optimality residual $E(\bm{x}_k;\beta)$.  
The key technique of this proof relies on showing 
$E(\bm{x}_k;\beta)$ is bounded above by the subproblem objective reduction caused by the exact projection, which 
has been shown to converge to 0.  Before proceeding, we have the following lemma about
the relationship between primal optimal solution $\bm{z}_k^*$  and dual optimal solution  $\bm{u}_k^*$. 

%

\begin{lemma}\label{l.noco} Let 
 $\bm{z}_k^*$ be the optimal solution of  subproblem   \eqref{f.quad} and 
 $\bm{u}_k^*$ be the optimal solution of the dual subproblem  \eqref{f.dualp}.  It then holds true that 
  \[ \bv_k = \bz_k^* + \bm{u}_k^*\quad\text{and}\quad \bm{u}_k^*\in \mathcal{N}(\bm{z}_k^*|\Omega).\]
\end{lemma}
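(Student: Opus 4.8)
The plan is to read both identities directly off the first-order optimality condition of the strictly convex subproblem \eqref{f.quad} and then certify that the resulting vector is exactly the dual maximizer via weak duality. Since $p(\cdot;\bx_k)$ is the sum of the smooth term $\tfrac{1}{2}\|\bz-\bv_k\|_2^2$ and the convex indicator $\delta_\Omega$, its unique minimizer $\bz_k^*$ is characterized by $0 \in (\bz_k^* - \bv_k) + \partial\delta_\Omega(\bz_k^*)$. Using $\partial\delta_\Omega(\bz_k^*) = \mathcal{N}(\bz_k^*|\Omega)$, this is precisely $\bv_k - \bz_k^* \in \mathcal{N}(\bz_k^*|\Omega)$. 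I would then introduce the candidate dual point $\hat{\bu} := \bv_k - \bz_k^*$, which immediately gives the first identity $\bv_k = \bz_k^* + \hat{\bu}$ together with the membership $\hat{\bu} \in \mathcal{N}(\bz_k^*|\Omega)$.

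It remains to confirm that $\hat{\bu}$ is the dual optimal solution $\bu_k^*$. I would establish this by proving $q(\hat{\bu};\bx_k) = p(\bz_k^*;\bx_k)$ and invoking the weak duality noted right after \eqref{f.dualp}. The main computational step is evaluating the support function $\delta_\Omega^*(\hat{\bu}) = \sup_{\bx\in\Omega}\la \bx, \hat{\bu}\ra$. Because $\hat{\bu} \in \mathcal{N}(\bz_k^*|\Omega)$ means $\la \hat{\bu}, \bx - \bz_k^*\ra \le 0$ for every $\bx\in\Omega$, and $\bz_k^*\in\Omega$, the supremum is attained at $\bz_k^*$, so $\delta_\Omega^*(\hat{\bu}) = \la \bz_k^*, \hat{\bu}\ra$. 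Substituting this value and $\hat{\bu} = \bv_k - \bz_k^*$ into $q(\hat{\bu};\bx_k)$ collapses the expression, after routine algebra, to $\tfrac{1}{2}\|\bz_k^* - \bv_k\|_2^2 = p(\bz_k^*;\bx_k)$, where $\delta_\Omega(\bz_k^*)=0$ since $\bz_k^*\in\Omega$.

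Finally, weak duality gives $q(\bu;\bx_k) \le p(\bz_k^*;\bx_k)$ for all $\bu$, so the equality $q(\hat{\bu};\bx_k) = p(\bz_k^*;\bx_k)$ forces $\hat{\bu}$ to be a dual maximizer; strict concavity of $q(\cdot;\bx_k)$ (from the $-\tfrac{1}{2}\|\bu-\bv_k\|_2^2$ term) makes the maximizer unique, hence $\bu_k^* = \hat{\bu}$ and both claimed identities follow. I expect the only mildly delicate point to be the support-function evaluation, namely recognizing that normal-cone membership makes the defining supremum attained at $\bz_k^*$; everything else is the standard Fenchel--Rockafellar primal-dual correspondence for a projection problem. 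As a reserve, I would note one can argue symmetrically from the dual optimality condition $\bv_k - \bu_k^* \in \partial\delta_\Omega^*(\bu_k^*)$ and the conjugacy relation $\partial\delta_\Omega^* = (\partial\delta_\Omega)^{-1}$, which yields $\bu_k^* \in \mathcal{N}(\bv_k - \bu_k^*|\Omega)$ and therefore $\bz_k^* = \bv_k - \bu_k^*$ by uniqueness of the projection.
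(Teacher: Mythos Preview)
Your argument is correct. Both approaches begin from the primal optimality condition $\bv_k-\bz_k^*\in\mathcal{N}(\bz_k^*\mid\Omega)$, but they diverge in establishing that $\bv_k-\bz_k^*$ is the dual optimizer. The paper simply observes that $\bz_k^*=\text{prox}_{\delta_\Omega}(\bv_k)$ and $\bu_k^*=\text{prox}_{\delta_\Omega^*}(\bv_k)$ and then invokes the Moreau decomposition $\bv_k=\text{prox}_{\delta_\Omega}(\bv_k)+\text{prox}_{\delta_\Omega^*}(\bv_k)$ as a black box, reading off $\bu_k^*=\bv_k-\bz_k^*$ in one line. You instead set $\hat\bu:=\bv_k-\bz_k^*$, evaluate $\delta_\Omega^*(\hat\bu)=\la\bz_k^*,\hat\bu\ra$ from the normal-cone inequality, compute $q(\hat\bu;\bx_k)=p(\bz_k^*;\bx_k)$, and conclude via weak duality and strict concavity that $\hat\bu=\bu_k^*$. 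Your route is longer but entirely self-contained: it does not rely on the reader knowing the Moreau identity and in effect reproves it for this special case. The paper's proof is shorter but presupposes that result. Your reserve argument through $\partial\delta_\Omega^*=(\partial\delta_\Omega)^{-1}$ is also valid and closer in spirit to the Moreau-based reasoning.
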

\begin{proof} By the definition of  $\bm{z}_k^*$ and  $\bm{u}_k^*$, we can rewrite 
	\begin{align*}
	\bm{z}_k^* = \argmin \limits _{\bm{z}} \delta_{\Omega}(\bm{z})+{\frac {1}{2}}\|\bm{z}-\bm{v}_k\|_{2}^{2}, \\
	\bm{u}_k^* = \argmin \limits _{\bm{u}} \delta_{\Omega}^*(\bm{u})+{\frac {1}{2}}\|\bm{u}-\bm{v}_k\|_{2}^{2},
	\end{align*}
or, equivalently 
\[\bm{z}_k^* = \text{prox}_{\delta_{\Omega}}(\bm{v}_k)\quad \text{and}\quad \bm{u}_k^* = \text{prox}_{\delta_{\Omega}^*}(\bm{v}_k).\]
	Using the Moreau decomposition  \cite{moreau1965proximite}, we have
	\begin{equation}\label{f.md} 
	\bm{v}_k  = \text{prox}_{\delta_{\Omega}}(\bm{v}_k)+\text{prox}_{\delta_{\Omega}^*}(\bm{v}_k) = \bm{z}_k^* + \bm{u}_k^*.
	\end{equation}
 The optimality condition of the primal subproblem  \eqref{f.quad} gives 
	\[	
	0 \in \bm{z}_k^* - \bm{v}_k +\mathcal{N}(\bm{z}_k^*|\Omega).
	\]
	Combining  this with     \eqref{f.md}, we have $\bm{u}_k^*\in \mathcal{N}(\bm{z}_k^*|\Omega)$.
\end{proof}

We are now ready to analyse the convergence of the optimality residual $E(\bm{x}_k;\beta)$. 

%
%
\begin{theorem}\label{l.KKTcon}
	Suppose   $\{\bm{x}_k\}$ is generated by Algorithm  \ref{a.Inexact} or  \ref{a.Inexact-ls}. It holds that
	\begin{equation}\label{f.KKTcon}
	\lim\limits_{k\to\infty} E(\bm{x}_k;\beta) = 0.
	\end{equation}
\end{theorem}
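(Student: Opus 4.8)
The plan is to bound the optimality residual $E(\bm{x}_k;\beta)$ above by a constant multiple of the \emph{exact} subproblem reduction $\Delta p(\bz_k^*;\bx_k)$, which Lemma \ref{l.conv1} has already shown tends to zero. The first step is to observe that $E(\bm{x}_k;\beta) = \|\bx_k - \mathcal{P}_\Omega(\bv_k)\|_2^2 = \|\bx_k - \bz_k^*\|_2^2$, since $\mathcal{P}_\Omega(\bv_k)$ is by definition the unique minimizer $\bz_k^*$ of \eqref{f.quad}. Hence it suffices to control $\|\bx_k - \bz_k^*\|_2^2$.

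Next I would exploit the strong convexity of $p(\cdot;\bx_k)$ restricted to $\Omega$. There the objective is $\tfrac12\|\bz-\bv_k\|_2^2$, a $1$-strongly convex quadratic with gradient $\bz-\bv_k$. The first-order optimality condition for the minimizer $\bz_k^*$ over the convex set $\Omega$ — equivalently the normal-cone relation $\bu_k^* = \bv_k - \bz_k^* \in \mathcal{N}(\bz_k^*\mid\Omega)$ supplied by Lemma \ref{l.noco} — yields the variational inequality $\la \bz_k^* - \bv_k, \bx_k - \bz_k^*\ra \ge 0$. Since $\bx_k\in\Omega$ and the quadratic expansion carries no remainder, I would expand
\[
\Delta p(\bz_k^*;\bx_k) = \tfrac12\|\bx_k-\bv_k\|_2^2 - \tfrac12\|\bz_k^*-\bv_k\|_2^2 = \tfrac12\|\bx_k - \bz_k^*\|_2^2 + \la \bz_k^*-\bv_k,\,\bx_k-\bz_k^*\ra \ge \tfrac12\|\bx_k-\bz_k^*\|_2^2,
\]
which is precisely $\Delta p(\bz_k^*;\bx_k) \ge \tfrac12 E(\bm{x}_k;\beta)$.

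Finally, combining this bound with Lemma \ref{l.conv1}, which gives $\Delta p(\bz_k^*;\bx_k)\to 0$ for the iterates of both algorithms, produces $E(\bm{x}_k;\beta) \le 2\,\Delta p(\bz_k^*;\bx_k)\to 0$, completing the proof. I do not expect any genuine obstacle: the substantive work — the vanishing of $\Delta p(\bz_k^*;\bx_k)$ and the primal--dual/normal-cone structure of the exact projection — has already been done in Lemmas \ref{l.conv1} and \ref{l.noco}. The only point deserving care is checking that the quadratic Taylor expansion is exact, so that no remainder term corrupts the $\tfrac12$ factor, and that the cross term $\la \bz_k^*-\bv_k,\,\bx_k-\bz_k^*\ra$ has the correct nonnegative sign via the variational inequality; both are immediate.
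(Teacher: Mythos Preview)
Your proposal is correct and follows essentially the same approach as the paper: both derive the key inequality $E(\bx_k;\beta)\le 2\,\Delta p(\bz_k^*;\bx_k)$ from the projection's variational inequality (the normal-cone relation of Lemma~\ref{l.noco}) and then invoke Lemma~\ref{l.conv1}. The only cosmetic difference is that the paper routes the algebra through the Moreau decomposition $\bv_k=\bz_k^*+\bu_k^*$ and a completion-of-squares, whereas you expand $\tfrac12\|\bx_k-\bv_k\|_2^2-\tfrac12\|\bz_k^*-\bv_k\|_2^2$ directly; the substance is identical.
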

\begin{proof}
Consider the dual optimal solution $\bu_k^*$. We have from    Lemma  \ref{l.noco} that  $\bm{u}_k^*\in \mathcal{N}(\bm{z}_k^*|\Omega)$, 
meaning 
	\begin{equation}\label{f.normalc}
	(\bm{x}-\bm{z}_k^*)^T\bm{u}_k^* \le 0, \quad \forall \bm{x}\in\Omega.
	\end{equation}
It follows that
	\begin{equation}
	\begin{aligned}
	E(\bm{x}_k;\beta) & = \|\bm{x}_k-\bm{z}_k^*\|^2\\
	&\le (\|\bm{x}_k-\bm{z}_k^*\|^2-2(\bm{x}_k-\bm{z}_k^*)^T\bm{u}_k^*)\\
	&=(\|\bm{x}_k-\bm{z}_k^*-\bm{u}_k^*\|^2-\|\bm{u}_k^*\|^2)\\
	&=(\|\bm{x}_k-\bm{v}_k\|^2-\|\bv_k- \bm{z}_k^*\|^2)\\
	& = 2[ p(\bx_k; \bx_k) - p(\bz_k^*; \bx_k) ]\\
	&=2 \Delta p(\bm{z}_k^*;\bm{x}_k),
	\end{aligned}\label{f.KKTbound}
	\end{equation}
 	where the inequality follows from  \eqref{f.normalc}, the third equality is by Lemma \ref{l.noco} and the fourth equality is from the definition of $p(\cdot ; \bx_k)$. 
Combing this with 	Lemma \ref{l.conv1}, we have 
$\lim\limits_{k\to \infty} E(\bm{x}_k;\beta) = 0$, completing the proof. 
\end{proof}

\subsection{Worst-case complexity analysis}
In this subsection, we provide the worst-case complexity analysis of our proposed IGPM.  It should be noticed that we do not necessarily require the objective 
$f$ to be convex. 
Therefore, we are interested in showing the convergence rate for the ergodic average  of the optimality residual $E(\cdot; \beta)$, where the ergodic average of $E(\bx_t;\beta)$ can be denoted as $\frac{1}{k}\sum\limits_{t=0}^{k-1} E(\bx_t;\beta)$.
 

We first show the complexity result for Algorithm \ref{a.Inexact} in the following theorem. 

\begin{theorem}\label{t.crls}
	Suppose  $\{\bm{x}_k\}$ is generated by Algorithm  \ref{a.Inexact}.  Then 
the ergodic average $\frac{1}{k}\sum\limits_{t=0}^{k-1} E(\bx_t;\beta)$ has  convergence rate $\mathcal{O}(1/k)$.  In particular, 
	\begin{equation}\label{thm.1.complexity}
	\frac{1}{k}\sum\limits_{t=0}^{k-1} E(\bx_t;\beta)  \le \frac{1}{k}\left(\frac{2}{\gamma}(\beta  ( f(\bm{x}_0) -  \underline{f} ) + (1-\gamma) \hat{\omega})\right).
	\end{equation}
\end{theorem}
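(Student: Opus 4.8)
The plan is to chain together three inequalities already established in the preceding analysis and then apply a telescoping sum. The natural starting point is the bound $E(\bm{x}_k;\beta) \le 2\Delta p(\bm{z}_k^*;\bm{x}_k)$ derived in \eqref{f.KKTbound} during the proof of Theorem \ref{l.KKTcon}. This immediately reduces the task to controlling the subproblem objective reduction induced by the \emph{exact} projection $\bm{z}_k^*$, an a priori uncomputable quantity, in terms of things the algorithm actually produces.

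The second step is to convert this exact-projection reduction into a summable expression. From the inexact termination criterion, inequality \eqref{f.pzpzs} reads $\Delta p(\bm{x}_{k+1};\bm{x}_k) + \omega_k \ge \gamma(\Delta p(\bm{z}_k^*;\bm{x}_k) + \omega_k)$, which I would rearrange into $\Delta p(\bm{z}_k^*;\bm{x}_k) \le \tfrac{1}{\gamma}\bigl(\Delta p(\bm{x}_{k+1};\bm{x}_k) + (1-\gamma)\omega_k\bigr)$. I then reuse the descent estimate from the proof of Lemma \ref{l.conv1} (which relies on $\beta \le 1/L$ together with Assumption \ref{ass}(ii)), namely the bound $\Delta p(\bm{x}_{k+1};\bm{x}_k) \le \beta\bigl(f(\bm{x}_k) - f(\bm{x}_{k+1})\bigr)$ extracted from \eqref{eq.temp1}. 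Combining the three gives the per-iteration estimate $E(\bm{x}_k;\beta) \le \tfrac{2}{\gamma}\bigl(\beta(f(\bm{x}_k) - f(\bm{x}_{k+1})) + (1-\gamma)\omega_k\bigr)$.

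The final step is to sum this bound over $t = 0,\ldots,k-1$. The function-value differences telescope to $f(\bm{x}_0) - f(\bm{x}_k) \le f(\bm{x}_0) - \underline{f}$ by Assumption \ref{ass}(i), while the relaxation terms are bounded by $\sum_{t=0}^{k-1}\omega_t \le \hat{\omega}$ thanks to the summability hypothesis Assumption \ref{ass}(iii). Dividing through by $k$ yields exactly the claimed bound \eqref{thm.1.complexity}, and since the numerator $\tfrac{2}{\gamma}\bigl(\beta(f(\bm{x}_0)-\underline{f}) + (1-\gamma)\hat{\omega}\bigr)$ is a fixed constant, the $\mathcal{O}(1/k)$ ergodic rate follows.

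I do not anticipate a genuine obstacle here, since every ingredient has been prepared in the earlier lemmas; the argument is essentially bookkeeping. The one point deserving care is tracking the coefficient of $\omega_k$ when rearranging \eqref{f.pzpzs}: the relaxation appears on both sides of the criterion and only the surplus $(1-\gamma)\omega_k$ survives. This is precisely why summability of $\{\omega_k\}$ is the correct hypothesis—it forces the accumulated error to contribute a finite additive constant rather than a term that grows with $k$ and would destroy the $\mathcal{O}(1/k)$ rate.
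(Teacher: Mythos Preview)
Your proposal is correct and follows essentially the same approach as the paper: both proofs chain the three inequalities \eqref{f.KKTbound}, \eqref{f.pzpzs}, and the descent estimate from \eqref{eq.temp1}, then telescope and invoke Assumptions~\ref{ass}(i) and~(iii). The only cosmetic difference is the order of chaining---the paper starts from the descent inequality and works toward $E(\bm{x}_t;\beta)$, whereas you start from $E(\bm{x}_t;\beta)$ and work toward the descent inequality---but the resulting per-iteration bound and the final constant are identical.
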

\begin{proof}
From \eqref{eq.temp1},  we have for $\beta \le 1/L$ that 
	$$
	\begin{aligned}
	f(\bm{x}_{t+1}) -   f(\bm{x}_{t}) &\le - \tfrac{1}{\beta}\Delta p(\bm{x}_{t+1};\bm{x}_{t})\\
	&\le  - \tfrac{\gamma}{\beta}\Delta p(\bm{z}_{t}^*;\bm{x}_{t}) - \tfrac{(\gamma-1)}{\beta}\omega_t\\
	&\le   - \tfrac{\gamma}{2\beta}E(\bx_t;\beta) - \tfrac{(\gamma-1)}{\beta}\omega_t,
	\end{aligned}
	$$
	where the second inequality follows by  \eqref{f.pzpzs} and last inequality follows by  \eqref{f.KKTbound}.
Summing up both sides of the above inequality  from $0$ to $k-1$, we have
	$$
	\begin{aligned}
	\sum\limits_{t=0}^{k-1} (\gamma E(\bx_t;\beta)  + 2 (\gamma-1)\omega_t) &\le 2 \beta\sum\limits_{t=0}^{k-1} (f(\bm{x}_t)-f(\bm{x}_{t+1})) \\
	&=  2\beta ( f(\bx_0) - f(\bx_k))\\
	&\le 2\beta(f(\bm{x}_0) - \underline{f}),
	\end{aligned}
	$$
	by Assumption \ref{ass}(i). 
It follows that 
	$$
	\begin{aligned}
	\sum\limits_{t=0}^{k-1} E(\bx_t;\beta) &\le \tfrac{2}{\gamma}(\beta ( f(\bm{x}_0) -  \underline{f} ) + (1- \gamma) \sum\limits_{t=0}^{k-1} \omega_t )\\
	&\le \tfrac{2}{\gamma}(\beta (f(\bm{x}_0) -  \underline{f} )+ (1-\gamma) \hat{\omega}),
	\end{aligned}
	$$
	where the second inequality follows by  Assumption \ref{ass}(iii). Dividing both sides by $k$,  we know 
	\eqref{thm.1.complexity} is true.
	\end{proof}

%

The complexity analysis of IGPM with backtracking line search is shown in the following theorem.
\begin{theorem}\label{t.cr}
	Suppose  $\{\bm{x}_k\}$ is generated by Algorithm  \ref{a.Inexact-ls}.  Then 
the ergodic average $\frac{1}{k}\sum\limits_{t=0}^{k-1} E(\bx_t;\beta)$ has  convergence rate $\mathcal{O}(1/k)$.  In particular, 
\begin{equation}\label{thm.2.complexity}
  \frac{1}{k} \sum_{t=0}^{k-1}  E(\bx_t;\beta)  \le  \frac{1}{k} \left(  \frac{ 2  (1- \gamma) }{ \gamma} \hat \omega  +   \frac{ 2\beta^2 L}{\eta (1-\eta)\theta \gamma }  (f(\bx_0)  - \underline f) \right).
  \end{equation}

\end{theorem}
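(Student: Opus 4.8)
The plan is to mirror the argument already used for Theorem \ref{t.crls}, the only structural difference being that in Algorithm \ref{a.Inexact} the decrease $f(\bx_{t+1})=f(\bz_t)$ is available for free, whereas here the update $\bx_{t+1}=\bx_t+\alpha_t\bd_t$ forces me to extract the per-step decrease from the line search. My target is a per-iteration inequality of the shape $f(\bx_{t+1})-f(\bx_t)\le -c_1 E(\bx_t;\beta)+c_2\omega_t$ with explicit constants; once this is in hand, summation together with Assumptions \ref{ass}(i) and (iii) closes the proof exactly as in Theorem \ref{t.crls}.

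First I would start from the Armijo condition \eqref{f.armijo}, which gives $f(\bx_{t+1})-f(\bx_t)\le \eta\alpha_t\bg_t^T\bd_t$. Since $\bd_t$ is a descent direction (so $\bg_t^T\bd_t\le 0$ by \eqref{gd over gd}) and the stepsize is uniformly bounded below by Lemma \ref{l.wd}, $\alpha_t\ge \theta(1-\eta)/(\beta L)$, replacing $\alpha_t$ by this lower bound in the nonpositive product yields $f(\bx_{t+1})-f(\bx_t)\le \tfrac{\eta\theta(1-\eta)}{\beta L}\bg_t^T\bd_t$. I would then turn the inner product into a subproblem reduction through \eqref{eq.temp2}, which gives $\bg_t^T\bd_t\le -\tfrac{1}{\beta}\Delta p(\bz_t;\bx_t)$, and hence $f(\bx_{t+1})-f(\bx_t)\le -\tfrac{\eta\theta(1-\eta)}{\beta^2 L}\Delta p(\bz_t;\bx_t)$.

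Next I would invoke the inexact termination criterion in the form \eqref{eq.temp3}, that is $\Delta p(\bz_t;\bx_t)\ge \gamma\,\Delta p(\bz_t^*;\bx_t)-(1-\gamma)\omega_t$, to pass from the accepted inexact reduction to the exact one, and then apply the residual bound $E(\bx_t;\beta)\le 2\,\Delta p(\bz_t^*;\bx_t)$ from \eqref{f.KKTbound}. Combining these produces the sought per-step estimate $f(\bx_{t+1})-f(\bx_t)\le -\tfrac{\eta\theta(1-\eta)\gamma}{2\beta^2 L}E(\bx_t;\beta)+\tfrac{\eta\theta(1-\eta)(1-\gamma)}{\beta^2 L}\omega_t$. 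Summing from $t=0$ to $k-1$ telescopes the left-hand side to $f(\bx_k)-f(\bx_0)$; bounding $f(\bx_k)\ge \underline f$ by Assumption \ref{ass}(i) and $\sum_t\omega_t\le \hat\omega$ by Assumption \ref{ass}(iii), then rearranging and dividing by $k$, gives exactly \eqref{thm.2.complexity}.

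The step I expect to be the main obstacle is the sign bookkeeping in the second paragraph: because $\bg_t^T\bd_t$ is nonpositive, substituting the lower bound for $\alpha_t$ \emph{strengthens} the inequality rather than weakening it, and I must confirm this is the direction I actually need so that the explicit constant $\theta(1-\eta)/(\beta L)$ of Lemma \ref{l.wd} propagates correctly into the final bound. This is precisely the ingredient that distinguishes the line-search analysis from Theorem \ref{t.crls}; once the constant is secured, the remaining telescoping and the use of Assumptions \ref{ass}(i),(iii) are routine.
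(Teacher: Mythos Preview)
Your proposal is correct and follows essentially the same approach as the paper: both combine the Armijo condition \eqref{f.armijo}, the stepsize lower bound from Lemma~\ref{l.wd}, the inequality \eqref{eq.temp2} relating $\bg_t^T\bd_t$ to $\Delta p(\bz_t;\bx_t)$, the inexactness bound \eqref{eq.temp3}, and the residual estimate \eqref{f.KKTbound} to obtain the identical per-iteration inequality, then telescope. The only cosmetic difference is orientation: the paper first chains $E(\bx_t;\beta)\to -\bg_t^T\bd_t$ via \eqref{f.bla}--\eqref{f.gd} and then plugs into Armijo, whereas you start from Armijo and chain downward to $E(\bx_t;\beta)$; the ingredients, constants, and final bound are the same.
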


\begin{proof}
It holds true that 
 	\begin{equation}
	E(\bx_t;\beta) + 2\omega_t\le 2 (\Delta p(\bm{z}_t^*;\bm{x}_t) + \omega_t)\le\tfrac{2}{\gamma}(\Delta p(\bm{z}_t;\bm{x}_t) + \omega_t)\le \tfrac{2}{\gamma}
	( - \beta\bm{g}_t^T\bm{d}_t + \omega_t),
	\label{f.bla}
	\end{equation}
	where the first inequality is from  \eqref{f.KKTbound}, the second inequality is from \eqref{eq.temp3} and the last inequality is by \eqref{eq.temp2}.
 Therefore, we have 
	\begin{equation}\label{f.gd}
	-\bm{g}_t^T\bm{d}_t \ge   \tfrac{\gamma}{2\beta} E(\bx_t;\beta)  +  \tfrac{\gamma-1}{\beta}\omega_t .
	\end{equation}

		On the other side, by  \eqref{f.armijo} and  Lemma \ref{l.wd}, we have
	\[
	f(\bm{x}_t)-f(\bm{x}_{t+1})\ge-\eta\alpha_t\bm{g}_t^T\bm{d}_t\ge - \frac{\eta(1-\eta)\theta \bm{g}_t^T\bm{d}_t}{ \beta L},
	\]
	 which, together with \eqref{f.gd}, leads to 
	\[
	f(\bm{x}_t)-f(\bm{x}_{t+1})\ge    \frac{\eta(1-\eta)\theta \gamma }{ 2\beta^2 L}   E(\bx_t;\beta)  +  \frac{\eta  (1-\eta)\theta (\gamma-1) }{ \beta^2 L}   \omega_t  .
	\]	
	Summing up both sides from 0 to $k-1$, we have 
	\[
    \frac{\eta(1-\eta)\theta \gamma }{ 2\beta^2 L} \sum_{t=0}^{k-1}  E(\bx_t;\beta)  +  \frac{\eta (1-\eta)\theta (\gamma-1) }{ \beta^2 L}  \sum_{t=0}^{k-1}  \omega_t  \le f(\bx_0) - f(\bx_k)  \le f(\bx_0) - \underline f
	\]	
	by Assumption \ref{ass}(i). 
	Therefore, 
\[   \sum_{t=0}^{k-1}  E(\bx_t;\beta) \le\frac{ 2(1- \gamma) }{ \gamma} \hat \omega  +   \frac{ 2\beta^2 L}{\eta (1-\eta)\theta \gamma }  (f(\bx_0)  - \underline f) \]
Dividing both sides by $k$, we obtain \eqref{thm.2.complexity}. 
   \end{proof}

%

\section{Inexact Projections onto   $\ell_1$ Ball}\label{s.l1}

In this section, we   apply our inexact gradient projection methods  for solving the $\ell_1$ ball constrained problem, where  the set $\Omega$ in  \eqref{f.prob} is an $\ell_1$ ball. Many recent problems in machine learning  \cite{koh2007interior,ng2004feature}, statistics  \cite{candes2007dantzig,tibshirani1996regression}, signal processing  \cite{candes2006stable, Cand2008Enhancing} and compressed sensing  \cite{amelunxen2014living, stojnic2009various} can be formulated into the minimization of  a nonlinear objective on  an $\ell_1$-norm ball,
\begin{equation}
\min \limits_{\bm{x}} \    f(\bm{x}) \quad \text{s.t.} \   \bx \in \mathbb{B}^n(\tau),
\label{f.l1prob}
\end{equation}
where $\tau$ is the radius of the $\ell_1$ ball $\mathbb{B}^n(\tau)$.

We design an active-set method for solving the $\ell_1$-ball projection subproblems, which can be shown to terminate in finite number of iterations for finding the exact projection. For each iteration, this method only needs to compute the projection onto a hyperplane in a space with lower dimension.  
We then incorporate this method into our proposed inexact gradient projection methods, so that 
the occupational cost for each $\ell_1$ ball projection subproblem reduces to only several projections onto the hyperplanes. 
For simplicity, we remove the subscript $k$ from our description as we focus on the projection subproblem, so that 
$p( \cdot )$ and $q(\cdot )$ denote the primal and dual value of the projection problem.

The problem of projecting $\bv$ onto the $\ell_1$-ball can be formulated as 
\begin{equation}
\min \limits_{\bm{z}} \   \tfrac{1}{2} \| \bm{z} - \bm{v}\|_2^2  \quad \text{s.t.} \  \bz\in \mathbb{B}^n(\tau).
 \label{f.l1proj}
\end{equation}
Most existing $\ell_1$-ball projection algorithms take advantage of the symmetry of the $\ell_1$-ball, and transform this projection problem into 
the projection onto  the simplex defined as 
$$
\mathcal{S}(\tau, n):=\{ \bm{x}\in\mathbb{R}^n|\sum_{i=1}^n x_i=\tau\ \text{and}\ x_i\ge0,\forall i=1,2,\cdots,n  \}.
$$
The following lemma, which is presented in  \cite{michelot1986finite, duchi2008efficient}, shows the relationship between the projection onto the $\ell_1$-ball and the simplex. 
\begin{lemma}\label{p.simplex}
	Let $\bm{w}=\mathcal{P}_{\mathcal{S}(\tau, n)}(|\bm{v}|)$, then 
	$$
	\mathcal{P}_{\mathbb{B}^n}(\bm{v})=
	\begin{cases}
	\bm{v},& \text{if}\quad \bm{v}\in\mathbb{B}^n(\tau), \\
	\sign(\bv)\circ \bw,& \text{otherwise}.
	\end{cases}
	$$
\end{lemma}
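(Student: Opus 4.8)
The plan is to split into the two cases of the statement and reduce the nontrivial one to a simplex projection by exploiting the coordinatewise sign symmetry of the $\ell_1$ ball. The case $\bm{v}\in\mathbb{B}^n(\tau)$ is immediate: $\bm{v}$ is feasible and attains zero distance to itself, so it is the unique minimizer of $\tfrac12\|\bm{z}-\bm{v}\|_2^2$ over $\mathbb{B}^n(\tau)$. For the remaining case I would assume $\|\bm{v}\|_1>\tau$ and let $\bm{z}^*=\mathcal{P}_{\mathbb{B}^n}(\bm{v})$ denote the unique projection, then show it equals $\sign(\bm{v})\circ\bm{w}$.

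First I would establish that the projection matches the sign pattern of $\bm{v}$, namely $v_iz_i^*\ge 0$ for every $i$, with $z_i^*=0$ whenever $v_i=0$. This follows from a coordinatewise exchange argument: the objective $\sum_i (z_i-v_i)^2$ is separable and the feasible set depends on $\bm{z}$ only through $\sum_i|z_i|$, so if some coordinate satisfied $v_iz_i^*<0$, then replacing $z_i^*$ by $0$ would strictly decrease that term while not increasing $\sum_i|z_i|$, contradicting optimality.

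With sign preservation in hand, set $w_i:=|z_i^*|=\sign(v_i)z_i^*$, so that $\bm{z}^*=\sign(\bm{v})\circ\bm{w}$ with $\bm{w}\ge\bm{0}_n$. Using $\sign(v_i)^2=1$ on the support of $\bm{v}$, I would verify the objective identity $\|\bm{z}^*-\bm{v}\|_2^2=\|\bm{w}-|\bm{v}|\|_2^2$ together with the feasibility identity $\|\bm{z}^*\|_1=\sum_i w_i$. Consequently $\bm{w}$ is exactly the minimizer of $\|\bm{w}-|\bm{v}|\|_2^2$ over the truncated ball $\{\bm{w}\ge\bm{0}_n:\sum_i w_i\le\tau\}$, so the $\ell_1$-ball projection is completely determined by this nonnegative projection.

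The final step is to activate the budget constraint and identify this nonnegative projection with the simplex projection. Since $\||\bm{v}|\|_1=\|\bm{v}\|_1>\tau$, the unconstrained nonnegative minimizer $\bm{w}=|\bm{v}|$ is infeasible; by convexity, if the optimal $\bm{w}$ had $\sum_i w_i<\tau$ one could move a short distance along the segment from $\bm{w}$ toward $|\bm{v}|$, strictly decreasing the objective while remaining feasible, a contradiction. Hence $\sum_i w_i=\tau$ at the optimum, so the projection onto $\{\bm{w}\ge\bm{0}_n:\sum_i w_i\le\tau\}$ coincides with $\mathcal{P}_{\mathcal{S}(\tau,n)}(|\bm{v}|)$, which yields $\bm{w}=\mathcal{P}_{\mathcal{S}(\tau,n)}(|\bm{v}|)$ and therefore $\mathcal{P}_{\mathbb{B}^n}(\bm{v})=\sign(\bm{v})\circ\bm{w}$. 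The main obstacle is making this activation step rigorous — cleanly ruling out an interior optimum so that the inequality-constrained nonnegative ball can be replaced by the equality-constrained simplex; the sign-matching exchange argument, though elementary, also requires care at coordinates with $v_i=0$.
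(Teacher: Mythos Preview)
Your argument is correct: the sign-matching exchange, the change of variables $w_i=|z_i^*|$, and the activation of the budget constraint when $\|\bm{v}\|_1>\tau$ are all sound. The only minor point to tighten is the bijection at coordinates with $v_i=0$; there $\sign(v_i)=0$, so the map $\bm{w}'\mapsto\sign(\bm{v})\circ\bm{w}'$ is not injective, but your inequality still goes through because for any feasible $\bm{w}'\ge\bm{0}$ with $\sum_i w_i'\le\tau$ the point $\bm{z}'=\sign(\bm{v})\circ\bm{w}'$ is feasible in $\mathbb{B}^n(\tau)$ and satisfies $\|\bm{z}'-\bm{v}\|_2^2\le\|\bm{w}'-|\bm{v}|\|_2^2$, which is the direction you need.

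As for comparison with the paper: the paper does not actually prove this lemma. It is stated without proof and attributed to \cite{michelot1986finite, duchi2008efficient}. Your proposal therefore supplies what the paper omits, and it follows essentially the same line of reasoning one finds in those references (coordinatewise sign symmetry to reduce to the nonnegative orthant, then activation of the linear constraint to pass from the scaled simplex-ball to the simplex).
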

\noindent
This  lemma allows us to only focus on computing the projection of a vector in the nonnegative orthant $\mathbb{R}^n_+$.  
Therefore, we consider the projection of $\bv \in \mathbb{R}^n_+$ onto $\mathcal{S}(\tau, n)$, which is formulated as 
\begin{equation}
\begin{aligned}
\min \limits_{\bm{w}} \quad & \tfrac{1}{2} \| \bm{w} - \bm{v}\|_2^2 \\
\text{s.t.} \quad & \sum_{i=1}^{n}w_{i} = \tau,\\
& \quad\quad  w_i\geq 0,\quad i=1,\cdots,n.
\end{aligned}\label{f.alter}
\end{equation}
The following lemma \cite{condat2016fast}  is often used to determine the projection onto the simplex. 
\begin{lemma}\label{lem.temp}  There exists a unique $\nu \in \mathbb{R}$ such that the optimal solution of \eqref{f.alter} can be given by
\[ ( \mathbb{P}_{\mathcal{S}(\tau, n)}(\bv))_i = \max(v_i - \nu,0), \quad i=1,\ldots, n.\]
\end{lemma}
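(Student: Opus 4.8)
The plan is to characterize the minimizer of the strictly convex quadratic program \eqref{f.alter} through its KKT system and read off the thresholding form directly. First I would note that, for $\tau>0$, the simplex $\mathcal{S}(\tau,n)$ is a nonempty compact convex set and the objective $\tfrac{1}{2}\|\bw-\bv\|_2^2$ is strictly convex, so \eqref{f.alter} admits a unique optimal solution $\bw^*$. Since all constraints are affine, the KKT conditions are both necessary and sufficient for optimality. Introducing a multiplier $\nu\in\mathbb{R}$ for the equality constraint $\sum_i w_i=\tau$ and multipliers $\lambda_i\ge 0$ for the sign constraints $w_i\ge 0$, stationarity reads $w_i-v_i+\nu-\lambda_i=0$, i.e. $w_i=v_i-\nu+\lambda_i$, together with complementary slackness $\lambda_i w_i=0$.

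Next I would carry out the standard per-coordinate case split. If $w_i>0$ then $\lambda_i=0$, whence $w_i=v_i-\nu$ and necessarily $v_i>\nu$; if $w_i=0$ then $\lambda_i=\nu-v_i\ge 0$, whence $v_i\le\nu$. Both cases are captured simultaneously by $w_i=\max(v_i-\nu,0)$, which establishes the claimed form with $\nu$ being the equality multiplier. Because the KKT conditions are sufficient here, any scalar $\nu$ for which the vector $(\max(v_i-\nu,0))_i$ is feasible will in fact produce the (unique) optimizer.

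It remains to determine $\nu$ and establish its uniqueness, which I expect to be the only delicate point. I would define $\phi(\nu):=\sum_{i=1}^n\max(v_i-\nu,0)$ and observe that primal feasibility $\sum_i w_i=\tau$ amounts to $\phi(\nu)=\tau$. The function $\phi$ is continuous and non-increasing, with $\phi(\nu)\to+\infty$ as $\nu\to-\infty$ and $\phi(\nu)\to 0$ as $\nu\to+\infty$, so for $\tau>0$ the equation $\phi(\nu)=\tau$ is solvable by the intermediate value theorem. For uniqueness I would show that $\phi$ is \emph{strictly} decreasing on the set $\{\nu:\phi(\nu)>0\}$: whenever $\phi(\nu)>0$ some index has $v_i>\nu$, and the corresponding term $\max(v_i-\cdot,0)$ strictly decreases as its argument increases, while every other term is non-increasing. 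Consequently the level set $\{\nu:\phi(\nu)=\tau\}$ is a singleton for $\tau>0$, giving the unique $\nu$. Combined with the KKT characterization this completes the proof; the reliance on $\tau>0$ (without which $\nu$ need not be unique) and the strict-monotonicity argument for $\phi$ are the only points requiring genuine care, the remainder being routine once the KKT system is written down.
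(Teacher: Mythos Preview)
Your argument is correct. Note, however, that the paper does not actually supply a proof of this lemma: it is stated with a citation to \cite{condat2016fast} and used as a known result. Your KKT-based derivation---writing stationarity $w_i=v_i-\nu+\lambda_i$ with complementary slackness, reading off $w_i=\max(v_i-\nu,0)$ coordinatewise, and then pinning down $\nu$ as the unique root of $\phi(\nu)=\sum_i\max(v_i-\nu,0)=\tau$ via continuity and strict monotonicity on $\{\nu:\phi(\nu)>0\}$---is exactly the standard proof one finds in the literature (including the cited reference), so there is nothing to contrast. Your observation that uniqueness of $\nu$ genuinely relies on $\tau>0$ is accurate and worth keeping.
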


Using Lemma~\ref{lem.temp}, many existing algorithms focus on solving the piecewise linear equation 
\begin{equation} \label{piecewise} \sum_{i=1}^n \max(v_i - \nu,0) = \tau \end{equation}
for $\nu$, and then compute the projection accordingly.  
We now show the simplex projection problem \eqref{f.alter} can be further transformed into several projections onto the hyperplanes of the form
\[ \mathcal{H}(\tau, n) := \{ \bm{x} \in \mathbb{R}^n  \mid \bm{e}^T\bm{x} = \tau \}.\]  
Note that the feasible set of  \eqref{f.alter} is the intersection of two convex sets,  the hyperplane $\mathcal{H}(\tau, n)$ and the nonnegative orthant $\mathbb{R}^n_+$. Now consider projecting $\bv$ onto the hyperplane $\mathcal{H}(\tau, n)$. This will result in two cases.

\emph{Case (i)}  All components of 
\[\mathcal{P}_{\mathcal{H}(\tau, n)}(\bv) = \bm{v} - \frac{\bm{e}^T\bm{v}-\tau}{n} \bm{e} \]
 are all nonnegative, then we know $\mathcal{P}_{\mathcal{H}(\tau, n)}(\bv) \in \mathbb{B}(\tau)^n$ and that 
\[\mathcal{P}_{\mathcal{H}(\tau, n)}(\bv)_i = \max(v_i - (\be^T\bv-\tau)/n,0).
\]
By Lemma \ref{lem.temp}, we know 
\[\mathcal{P}_{\mathcal{S}(\tau, n)}(\bv) = \mathcal{P}_{\mathcal{H}(\tau, n)}(\bv).\]
An exact projection onto the simplex is found. 

\emph{Case (ii)}  There exists at least one component $\mathcal{P}_{\mathcal{H}(\tau, n)}(\bv)_j$ of $\mathcal{P}_{\mathcal{H}(\tau, n)}(\bv)$ is negative, i.e., 
$ v_j - (\be^T\bv-\tau)/n < 0$. 
Since 
\[ \sum_{i=1 }^n \mathcal{P}_{\mathcal{H}(\tau, n)}(\bv) = \sum_{i=1 }^n  (v_i - \frac{(\be^T\bv-\tau)}{n}) = \tau,\]
we know 
\[  \sum_{i=1}^n \max(v_i - (\be^T\bv-\tau)/n,0 ) \ge \tau - ( v_j - (\be^T\bv-\tau)/n ) > \tau.  \]
Hence for the unique root $\nu$ of \eqref{piecewise}, it must be true that 
$(\be^T\bv-\tau)/n < \nu$.   A key observation is that in the exact projection onto ${\mathcal{S}(\tau, n)}$, 
the corresponding $j$th component must be zero since we use $\nu > (\be^T\bv-\tau)/n$ to determine the projection by Lemma \ref{lem.temp}. 
Moreover, if we have $v_l - (\be^T\bv-\tau)/n=0$, it must be true that the corresponding $l$th component in the exact projection onto ${\mathcal{S}(\tau, n)}$ must also be zero.

Our projection algorithms are constructed based on these two cases.

%

\subsection{Exact $\ell_1$-ball projection algorithm}

Based on the two cases discussed above, we can project $\bv$ onto $\Hcal(\tau,n)$ and check whether there are negative complements in the projection.  
Given vector $\bw$, denote 
\[ I_+(\bw)  =\{i \mid  w_i>0\},\quad
I_0(\bw)  =\{i \mid  w_i=0\}\quad \text{and} \quad I_- (\bw)  =\{i \mid w_i<0\}.\]
If case (i) happens, meaning $I_-(\Pcal_{\Hcal(\tau,n)}(\bv))=\emptyset$, we should terminate with an exact projection.   If case (ii) happens, meaning 
$I_- (\Pcal_{\Hcal(\tau,n)}(\bv) ) =\{i |w_i<0\} \neq \emptyset$, then we know 
\[ \Pcal_{\Scal(\tau,n)}(\bv)_i = 0,\quad   i\in I_- (\Pcal_{\Hcal(\tau,n)}(\bv)) \cup I_0 (\Pcal_{\Hcal(\tau,n)}(\bv)).\]  
This property makes it possible for us to only focus on the components in $I_+(\Pcal_{\Hcal(\tau,n)}(\bv))$, and eliminate the nonpositive components. After eliminating these components, the simplex projection problem reduces to 
a simplex projection problem in a lower-dimensional space, and the same argument will also apply.  
If we repeat this procedure to obtain the next iterate, then eventually we must encounter   case (i) where all the components of the projection onto the hyperplane 
are nonnegative since $\bv$ is finite dimensional. This procedure is stated in Algorithm \ref{a.l1}, where we use superscript $(j)$ to the $j$th iteration.

%
\begin{algorithm}[H]
	\caption{Exact $\ell_1$-ball projection method} \label{a.l1}
	\begin{algorithmic}[1]
		\STATE Given  $\tau > 0$ and $\bm{v} \notin \mathbb{B}^n(\tau)$. 
		\STATE Initialize $\by^{(0)}= \bw^{(0)}= |\bv|$, $\bw^*=\bm{0}_{n}$ and set $j=0$.
		\REPEAT
		\STATE Calculate $\bm{w}^{(j+1)}  = \bm{y}^{(j)} - \frac{1}{|I_+|}(\bm{e}^T\bm{y}^{(j)} -\tau)\bm{e}$.
		\STATE Update $I_+(\bm{w}^{(j+1)} )$, $I_0(\bm{w}^{(j+1)} ) $ and $I_-(\bm{w}^{(j+1)} )$.
		\STATE Set $\by^{(j+1)} = \bm{w}^{(j+1)}_{I_+(\bm{w}^{(j+1)} )}$.
		\STATE Set $j\gets j+1$.
		\UNTIL $I_-(\bm{w}^{(j)} )=\emptyset$.
		\STATE Set $\bw^*_{I_+(\bm{w}^{(j+1)} )} = \by^{(j+1)}$.
		\STATE Output $\bz^*=\sign(\bv)\circ \bw^*$.
	\end{algorithmic}
\end{algorithm}

In this algorithm, at each iteration, we compute the projection onto a hyperplane in a reduced space of smaller dimension, which needs 
$2|I_+(\bm{w}^{(j+1)} )| $ operations. This 
 can dramatically reduces the computational cost per iteration  if a lot of zero components are detected for each iteration. For the worst case, we   have 
  $| I_-(\bm{w}^{(j+1)} ) |= 1 $ and $| I_0(\bm{w}^{(j+1)} ) |= 0$ for each iteration. In this case, this procedure needs $n$ iterations to terminate, and the computational cost at the $j$th iteration is $2 (n-j)$. 
  Therefore, the worse-case complexity of this algorithm is given by 
  \[ \sum_{j=0}^{n-1} 2(n-j) = n(n+1).\]
  In \cite{condat2016fast}, they consider the similar  algorithm but without reducing the working space for each iteration, and therefore has worst-case complexity of $2n^2$.  However, it is reported in  \cite{condat2016fast} that  $\mathcal{O}(n\log{n})$ complexity  is generally  observed
in practice.

%

\subsection{Inexact $\ell_1$-ball projection algorithm}
%
%
%
As shown in the previous subsection, Algorithm  \ref{a.l1} can be used to find the exact projection onto the $\ell_1$ ball.  Therefore,  the $\ell_1$ ball constraint problem  \eqref{f.l1prob} can be solved by incorporating  Algorithm  \ref{a.l1} within the gradient projection methods. Next we 
design an inexact  version of this projection algorithm to use our inexact gradient projection methods. 

The design of such an inexact  algorithm needs to address two issues.  First, since our IGPM only accepts feasible iterates, we need to guarantee that the subproblem solver returns a point within the $\ell_1$ ball. To achieve this, we add a scaling phase in the projection algorithm  to obtain a feasible 
iterate 
\[ \hat{\bm{w}}^{(j+1)} = \tau\frac{\bm{w}^{(j+1)}}{\|\bm{w}^{(j+1)}\|_1}.\]
It should be noticed that this feasible iterate is only used for computing the ratio $\gamma^{(j)}$, but not for updating the next iterates.  Since the algorithm terminates in finite number of iterations, $\hat{\bm{w}}^{(j+1)}$ eventually converges to the primal optimal solution (in the reduced space).  

On the other hand, for 
   $\Omega=\mathbb{B}^n(\tau)$, the support function can be shown as $\delta_{\Omega}^*=\tau \|\bm{u}^{(j)}\|_{\infty}$. 
   Therefore, the dual of the $\ell_1$-ball projection problem is given by 
\[  \max_{\bu} -\tfrac{1}{2}\|\bm{u} - \bm{v}\|_2^2 - \tau\|\bu\|_{\infty} + \tfrac{1}{2} \|\bm{v}\|_2^2 .\]   
We also need a dual estimate $\bm{u}^{(j)}$  to calculate $q(\bm{u}^{(j)})$ in $\gamma^{(j)}$ without solving the dual problem.    
Note that by   \eqref{f.md}, we have  $\bm{u}^* = \bm{v} - \bm{z}^*$. 
This motivates us to use 
\[ \bm{u}^{(j+1)} = \bm{\bv}^{(j)} - \bw^{(j+1)} \] 
as our dual estimate.  
Therefore, $ \bm{u}^{(j)}$ will eventually converges to the  optimal dual solution (in the reduced space).  Overall, based on our selection of 
$ \hat{\bm{w}}^{(j+1)} $ and $ \bm{u}^{(j+1)} $, we know the corresponding ratio 
$\gamma^{(j)}$ will eventually exceed $\gamma$, and we can terminate the subproblem solver with final point $\hat \bw^{(j+1)}$.

%
\begin{algorithm}[h!]
	\caption{Inexact $\ell_1$-ball projection method} \label{a.il1}
	\begin{algorithmic}[1]
		\STATE Given  $\tau > 0$ and $\bm{v} \notin \mathbb{B}^n(\tau)$. 
		\STATE Initialize $\by^{(0)}= \bw^{(0)}= |\bv|$, $\bw^{(*)}=\bm{0}_{n}$ and set $j=0$.
		\REPEAT
		\STATE Calculate $\bm{w}^{(j+1)}  = \bm{y}^{(j)} - \frac{1}{|I_+|}(\bm{e}^T\bm{y}^{(j)} -\tau)\bm{e}$.
		\STATE Update $I_+(\bm{w}^{(j+1)} )$, $I_0(\bm{w}^{(j+1)} ) $ and $I_-(\bm{w}^{(j+1)} )$.
		\STATE Calculate $\hat\bw^{(j+1)} = \bm{w}^{(j+1)}/\|\bm{w}^{(j+1)}\|_1$ and $\bu^{(j+1)} =  \by^{(j)} - \hat\bw^{(j+1)}$. 
		\STATE Calculate $\gamma^{(j+1)} $ according to \eqref{f.ratio}.
		\STATE Set $\by^{(j+1)} = \bm{w}^{(j+1)}_{I_+(\bm{w}^{(j+1)} )}$.
		\STATE Set $j\gets j+1$.
		\UNTIL $I_-(\bm{w}^{(j)} )=\emptyset$ or $\gamma^{(j)} \ge \gamma$.
		\STATE Set $\bw_{I_+(\bm{w}^{(j)} )}^*=\by^{(j)}$.
		\STATE Output $\bz^*=\text{sign}(\bv)\bw^* $.
	\end{algorithmic}
\end{algorithm}

\noindent 
Applying the 
same argument of Algorithm \ref{a.l1}, we know this algorithm terminates in   at most $n$ iterations.  If we did not terminate this solver prematurely, the ratio $\gamma_k^{(j)}$ would converge to $1$, which coincides with the exact solver. Therefore, we can terminate this solver prematurely when the ratio $\gamma_k^{(j)}$ exceeds the prescribed threshold $\gamma$.

\section{Numerical Experiments}\label{s.ne}

In this section, we apply our proposed methods to sparse signal recovery problem \cite{Cand2008Enhancing}, which aims to recover sparse signal from linear measurements. The sparse signal recovery problem can be formulated as a least squared problem with an $\ell_1$-ball constraint
\begin{equation}\label{f.lasso}
\begin{aligned}
\min_{\bm{x}\in\mathbb{R}^n}\quad & \tfrac{1}{2}\|\bm{A}\bm{x} - \bm{b}\|_2^2\\
\st \quad & \|\bm{x}\|_1 \le \tau,
\end{aligned}
\end{equation}
where $\bm{A}\in \mathbb{R}^{m\times n}$ is the measurement matrix and $\bm{b}\in \mathbb{R}^{m}$ is the observation  vector.

We test both dense and sparse measurement matrix $\bm{A}\in \mathbb{R}^{m\times n}$ cases. Entries of $\bm{A}\in \mathbb{R}^{m\times n}$ is sampled from a Gaussian distribution. 
Denote $s$ as the number of nonzeros of $\bar{\bm{x}}$, where $\bar{\bm{x}}$ is the being recovered signal. We set up experiment as 
\begin{enumerate}
	\item[(a)] Construct $\bar{\bm{x}}\in \mathbb{R}^{n\times 1}$ with randomly choosing $n-s$ components to be zero. Each nonzero entry equals $\pm 1$ with equal probability.
	\item[(b)] Form $\bm{b}=\bm{A}\bar{\bm{x}}$.
	\item[(c)] Solve  \eqref{f.lasso} for $\hat{\bm{x}}$.
\end{enumerate}

Combining our proposed inexact projection framework with the $\ell_1$ projection methods, 
 we have four approaches for solving  the $\ell_1$-ball constrained problem  \eqref{f.l1prob}. 
They are
\begin{itemize}
\item GPM1: exact projection method without line search using Algorithm  \ref{a.l1}.
\item GPM2: exact projection method with line search using Algorithm  \ref{a.l1}.
\item IGPM1:  inexact projection method without line search using Algorithm  \ref{a.il1}. 
\item IGPM2:  inexact projection method with line search using Algorithm  \ref{a.il1}.
\end{itemize}

In the experiments, we initialize $\bm{x}_0 = \bm{0}_n$. The initial value of sequence $\{\omega_k\}$ is chosen as $\omega_0=10^{-3}$. 
The radius $\tau$ of $\ell_1$ norm ball equals $n-s$. For algorithms without line search, the Lipschitz constant of the objective of problem  \eqref{f.lasso} is the largest eigenvalue of the positive semidefinite matrix $\bm{A}^T\bm{A}$, denoted as $\lambda_{\text{max}}$. We calculate $\lambda_{\text{max}}$ approximately by using the power method  \cite{journee2010generalized}, and then we set      $\beta=\frac{0.8}{\lambda_{\text{max}}}$.  For algorithms with backtracking line search, 
we use the parameters 
\[ \beta = 0.01,    \eta = 0.01,    \theta=0.7,    \alpha_0=1.\]
The algorithms  are terminated whenever  
\[ \|\bm{z}_{k}-\bm{x}_{k}\|_\infty  \le 10^{-4}. \]  
Our code is a Python implementation and run on a Dell Precision Tower 7810 Workstation with Intel Xeon processor at 2.40GHz, 64Gb of main memory, and CPU: E5-2630 v3. 

The results are taking average of $20$ runs, which are shown in Table \ref{t.1} to Table \ref{t.6}. The column names are  explained in Table  \ref{t.col}.

\begin{table}[H]
	\caption{Column names}
	\label{t.col}
	\begin{tabular}{c|l}
		\hline
		Alg   & Algorithm used for solving  \eqref{f.lasso} \\ \hline
		$\gamma$   & Different choices of $\gamma=0.6, 0.7, 0.8, 0.9$  \\ \hline
		Time   & CPU time \\ \hline
		Outer $k$ & Number of iterations  of gradient projection  algorithm \\ \hline
		Inner $j$   & Total number of iterations  of projection subproblem solver   \\ \hline
		\#backtracking   &  Total number of backtracking \\ \hline
	\end{tabular}
\end{table}

As shown in Table  \ref{t.1},  \ref{t.2} and  \ref{t.3}, for dense measurement matrix, IGPM1 and IGPM2 outperform GPM1 and GPM2, respectively,  in terms of CPU time. 
The inexact version in most cases does not 
need  more outer iterations to terminate, and it always takes fewer inner iterations, so that the overall computational time is shorter. 
In particular, the inexact version  without line search method in the overdetermined cases ($n\le m$) exhibits  a remarkable superiority for solving  \eqref{f.lasso}.
\begin{table}[h]
	\begin{center}
	\caption{Dense matrix with $n=2\times10^3$, $m=10^4$, $s=10^2$.}
		\begin{tabular}{c|ccccc}
			\hline
			Alg   & $\gamma$  & Time   &Outer $k$ & {Inner $j$} & {\  \#backtracking}\\ 
			\hline
			GPM1    & 1        & $1.12$   & $44.65$ & $189.10$       & $0$    \\ 
			GPM2    & 1        & $3.85$   & $70.95$ & $311.40$       & $331.15$    \\ 
			\hline
			{{IGPM1}} & {$0.6$}  & {$0.81$}  & {$44.60$} & {$117.70$}   &{$0$}  \\ 
			{{IGPM2}}  & {$0.6$}  & {$3.12$}   & {$62.50$} & {$185.30$} & {$294.00$}  \\ 
			\hline
			{IGPM1}    & {$0.7$}   & {$0.82$}  & {$44.70$} & {$122.60$} &{$0$}  \\	
			{IGPM2}    & {$0.7$}   & {$3.00$}   & {$61.05$} & {$193.45$}       &{$289.50$}   \\
			\hline
			{{IGPM1}}  & {$0.8$}  & {$0.81$} & {$44.70$} & {$125.45$}     &{$0$}  \\ 
			{{IGPM2}}  & {$0.8$}  & {$2.74$} & {$5985$} & {$223.50$}   & 	{$286.45$}  \\ 
			\hline
			{{IGPM1}}  & {$0.9$}  & {$0.80$}  & {$44.65$} & {$127.40$}   & {$0$}  \\  
			{{IGPM2}}  & {$0.9$}  & {$2.99$} & {$67.95$} & {$274.40$} & {$317.95$}  \\ 
			\hline
		\end{tabular}\label{t.1}
	\end{center}
\end{table}

\begin{table}[H]
	\caption{Dense matrix with $n=10^4$, $m=10^4$, $s=10^2$.}
	\centering
	\begin{tabular}{c|ccccc}
			\hline
			Alg   & $\gamma$  & Time   &Outer $k$ & {Inner $j$} & {\  \#backtracking}\\ 
			\hline
			GPM1    & 1        & $10.80$       & $90.45$    & $520.95$       & $0$    \\ 
			GPM2    & 1        & $15.30$         & $69.80$ & $393.35$       & $326.35$    \\ 
			\hline
			{{IGPM1}} & {$0.6$}  & {$8.33$}  & {$90.85$} & {$390.75$}   &{$0$}  \\ 
			{{IGPM2}}  & {$0.6$}  & {$11.33$}   & {$58.25$} & {$283.00$} & {$275.75$}  \\ 
			\hline
			{IGPM1}    & {$0.7$}   & {$7.96$}  & {$90.55$} & {$406.65$} &{$0$}  \\	
			{IGPM2}    & {$0.7$}   & {$11.52$}   & {$59.65$} & {$297.70$}       &{$285.65$}   \\
			\hline
			{{IGPM1}}  & {$0.8$}  & {$7.77$} & {$90.50$} & {$417.00$}     &{$0$}  \\ 
			{{IGPM2}}  & {$0.8$}  & {$11.44$} & {$60.50$} & {$304.95$}   & 	{$289.55$}  \\ 
			\hline
			{{IGPM1}}  & {$0.9$}  & {$7.69$}  & {$90.50$} & {$426.40$}   & {$0$}  \\  
			{{IGPM2}}  & {$0.9$}  & {$13.09$} & {$70.75$} & {$367.65$} & {$330.70$}  \\ 
			\hline
	\end{tabular}\label{t.2}
\end{table}

\begin{table}[H]
	\caption{Dense matrix with $n=10^4$, $m=2\times10^3$, $s=10^2$.}
	\begin{center}
		\begin{tabular}{c|ccccc}
			\hline
			Alg   & $\gamma$  & Time   &Outer $k$ & {Inner $j$} & {\  \#backtracking}\\ 
			\hline
			GPM1    & 1     & $35.57$          & $479.50$   & $2874.95$       & $0$    \\ 
			GPM2    & 1     & $5.17$           & $59.00$   & $421.45$       & $89.70$    \\ 
			\hline
			{{IGPM1}} & {$0.6$}  & {$24.75$}  & {$479.80$} & {$2471.30$}   &{$0$}  \\ 
			{{IGPM2}}  & {$0.6$}  & {$3.95$}   & {$69.95$} & {$282.60$} & {$111.45$}  \\ 
			\hline
			{IGPM1}    & {$0.7$}   & {$25.19$}  & {$479.65$} & {$2581.75$} &{$0$}  \\	
			{IGPM2}    & {$0.7$}   & {$3.88$}   & {$68.00$} & {$285.90$}       &{$107.80$}   \\
			\hline
			{{IGPM1}}  & {$0.8$}  & {$25.48$} & {$479.55$} & {$2643.35$}     &{$5$}  \\ 
			{{IGPM2}}  & {$0.8$}  & {$3.83$} & {$62.95$} & {$300.90$}   & 	{$96.55$}  \\ 
			\hline
			{{IGPM1}}  & {$0.9$}  & {$25.67$}  & {$479.55$} & {$2699.80$}   & {$0$}  \\  
			{{IGPM2}}  & {$0.9$}  & {$3.67$} & {$59.30$} & {$299.70$} & {$90.35$}  \\ 
			\hline
		\end{tabular}\label{t.3}
	\end{center}
\end{table}

For the cases of sparse matrix, we increase the dimension of matrix $\bm{A}$, and restrict  the density of the matrix, where density means the number of nonzero-valued elements divided by the total number of elements. We set the density to be $\frac{n}{1000m}$ for all sparse measurement matrix experiments. Due to the high-dimension of matrix $\bm{A}$, calculating the largest eigenvalue of matrix $\bm{A}^T\bm{A}$ becomes impractical. Therefore, we only compare exact and inexact versions with backtracking. Simulation results exhibit that our IGPM2 has a better performance than the GPM2 in saving computational time.

\begin{table}[H]
	\caption{Sparse matrix with $n=10^5$, $m=10^4$, $s=10^4$.}
	\begin{center}
		\begin{tabular}{c|cccccc}
			\hline
			Alg      &$\gamma$  & 	Time   &{Outer $k$} & {Inner $j$} & \#backtracking\\ 
			\hline
			{GPM2}     & 1        & {$28.19$} & {$29.25$} & {$37.3$}       & {$173.8 $}    \\ 
			\hline
			{{IGPM2}}  & {$0.6$}  & {$22.92$} & {$28.3$} & {$14.8$}     & {$169.95$}  \\ 
			\hline
			{IGPM2}    & {$0.7$}   & {$24.52$}  & {$29.3$} & {$20.15$}       & {$174.00$}     \\
			\hline
			{{IGPM2}}  & {$0.8$}  & {$24.41$} & {$29.2$} & {$21.3$}    & {$173.45$}  \\ 
			\hline
			{{IGPM2}}  & {$0.9$}  & {$24.91$} & {$29.2$} & {$24.2$}    & {$173.45$}  \\  \hline
		\end{tabular}\label{t.4}
	\end{center}
\end{table}

\begin{table}[H]
	\caption{Sparse matrix with $n=10^5$, $m=10^5$, $s=10^4$.}
	\begin{center}
		\begin{tabular}{c|cccccc}
			\hline
			Alg    &$\gamma$ & Time  &{Outer $k$} & {Inner $j$} & \#backtracking\\ 
			\hline
			{GPM2} & 1              & {$81.70$}  & {$83.55$} & {$412.75$}       & {$8.35$}    \\ 
			\hline
			{{IGPM2}}  & {$0.6$}  & {$20.84$}  & {$32.05$} & {$56.80$}   & {$10.10$}  \\ 
			\hline
			{IGPM2}    & {$0.7$}   & {$21.60$}  & {$32.60$} & {$62.90$}       & {$10.10$}           \\
			\hline
			{{IGPM2}}  & {$0.8$}  & {$38.90$} & {$57.70$} & {$136.10$}  & {$9.65$}  \\ 
			\hline
			{{IGPM2}}  & {$0.9$}  & {$42.77$}  & {$61.15$} & {$174.15$}  & {$9.35$}  \\  \hline
		\end{tabular}\label{t.5}
	\end{center}
\end{table}

\begin{table}[H]
	\caption{Sparse matrix with $n=10^5$, $m=10^6$, $s=10^4$.}
	\begin{center}
		\begin{tabular}{c|cccccc}
			\hline
			Alg     &$\gamma$ & Time &{Outer $k$} & {Inner $j$} & \#backtracking\\ 
			\hline
			{GPM2}     & 1        & {$102.97$}  & {$96.65$} & {$392.75$}       & {$122.20$}    \\ 
			\hline
			{{IGPM2}}  & {$0.6$}  & {$71.83$}  & {$99.90$} & {$259.30$}     & {$126.00$}  \\ 
			\hline
			{IGPM2}    & {$0.7$}   & {$72.20$} & {$98.30$} & {$275.55$}    & {$124.10$}      \\
			\hline
			{{IGPM2}}  & {$0.8$}  & {$72.02$}  & {$96.65$} & {$283.70$}   & {$121.95$}  \\ 
			\hline
			{{IGPM2}}  & {$0.9$}  & {$73.00$}  & {$96.75$} & {$294.40$}   & {$122.25$}  \\  \hline
		\end{tabular}\label{t.6}
	\end{center}
\end{table}


A key observation in all these experiments is that our algorithms are generally insensitive to the selection of the threshold $\gamma$ when it is varying on a wide range.  In fact, without tuning this parameter carefully, we merely show the performance on these values to avoid the potential slow tail convergence for the subproblem. It indicates that our proposed inexact strategy could achieve stable superior  performance for problems with different sizes and sparsity with the same values of $\gamma$.  To accelerate the local behavior,  one may also use dynamically changing $\gamma$ and drive it to 1 over the iteration. 

\section{Conclusions}\label{s.c}
We have proposed a    framework of  inexact primal-dual gradient projection methods for solving nonlinear problems with convex-set constraint. 
The key technique of such methods is a novel  criterion  to terminate the projection subproblem prematurely, making the methods 
suitable for situations  where the projections onto the convex set are not easy to compute. 
We presented the methods in two cases with or without backtracking line search.  
The   global convergence and  $O(1/k)$ ergodic convergence rate of the optimality residual in worst-case  have  been provided under loose assumptions. 
The proposed methods are applied to $\ell_1$-ball constrained problems, and their performance is exhibited through numerical test on sparse recovery problems. 

Our primary focus here is to reduce the computational cost per iteration of the gradient projection methods so that an inexact projection can be accepted while maintaining the convergence and efficiency. 
We are very aware that there exist more efficient stepsizes for $\beta$, such as the well-known BB stepsizes.  It should be  noticed that our proposed strategy can be easily applied when using truncated BB stepsizes, and the theoretical analysis can be easily generalized to such cases.

\section*{Funding}
This research is supported in part by National Natural Science Foundation of China under grant 61771013,  in part by Jiading Innovation Fund, in part by Research Fund of Tongji University for National Expert.

\bibliographystyle{tfnlm}
\bibliography{ref}

\end{document}